\documentclass[reqno, leqno,11pt]{article}
\usepackage{amsmath,amssymb,amsthm,amsbsy}
\usepackage[english]{babel}
\usepackage[ansinew]{inputenc}
\usepackage{graphicx}
\usepackage{a4wide}  
\usepackage{xcolor}
\usepackage{bbm,  mathrsfs}
\usepackage{scalerel,stackengine}
\usepackage{mathtools}
\usepackage[normalem]{ulem}
\usepackage{fancyhdr}

\numberwithin{equation}{section}


\newtheorem{theorem}{Theorem}
\newtheorem{lemma}[theorem]{Lemma}
\newtheorem{definition}[theorem]{Definition}

\theoremstyle{remark}
\newtheorem{rem}{Remark}[section]

\newcommand{\e}{\mathrm{e}} 
\newcommand{\N}{\mathbb{N}}

\newcommand{\Z}{\mathbb{Z}}


\stackMath
\newcommand\reallywidehat[1]{%
\savestack{\tmpbox}{\stretchto{%
  \scaleto{%
    \scalerel*[\widthof{\ensuremath{#1}}]{\kern.1pt\mathchar"0362\kern.1pt}%
    {\rule{0ex}{\textheight}}
  }{\textheight}%
}{2.4ex}}%
\stackon[-6.9pt]{#1}{\tmpbox}%
}
\parskip 1ex

\def\multiset#1#2{\ensuremath{\left(\kern-.6em\left(\genfrac{}{}{0pt}{}{#1}{#2}\right)\kern-.6em\right)}}
\newcommand{\mch}[2]{
\left.\mathchoice
  {\left(\kern-0.48em\binom{#1}{#2}\kern-0.48em\right)}
  {\big(\kern-0.30em\binom{\smash{#1}}{\smash{#2}}\kern-0.30em\big)}
  {\left(\kern-0.30em\binom{\smash{#1}}{\smash{#2}}\kern-0.30em\right)}
  {\left(\kern-0.30em\binom{\smash{#1}}{\smash{#2}}\kern-0.30em\right)}
\right.}

\newcommand{\be}{\begin{equation}}
\newcommand{\ee}{\end{equation}}

\def\1{{\mathchoice {1\mskip-4mu\mathrm l}      
{1\mskip-4mu\mathrm l}
{1\mskip-4.5mu\mathrm l} {1\mskip-5mu\mathrm l}}}

\title{Logarithms of Catalan generating functions: A combinatorial approach}
\author{Sabine Jansen\thanks{Mathematisches Institut, Ludwig-Maximilians-Universit{\"a}t, Theresienstr. 39, 80333 M{\"u}nchen, Germany.}\ \thanks{Munich Center for Quantum Science and Technology (MCQST), Schellingstr. 4, 80799 M{\"u}nchen.} \and Leonid Kolesnikov\footnotemark[1]}
\date{}
\begin{document}




\maketitle
\thispagestyle{empty}
\begin{abstract}	
We analyze the combinatorics behind the operation of taking the logarithm of the generating function $G_k$ for $k^\text{th}$ generalized Catalan numbers. We provide combinatorial interpretations in terms of lattice paths and in terms of tree graphs. Using explicit bijections, we are able to recover known closed expressions for the coefficients of $\log G_k$ by purely combinatorial means of enumeration. The non-algebraic proof easily generalizes to higher powers $\log^a G_k$, $a\geq 2$.~\\

\noindent \emph{Keywords: Catalan numbers, logarithms of generating functions, combinatorial interpretation, lattice paths, Dyck paths, plane trees, cycle-rooted trees, exact enumeration}.\\
	
	\noindent \emph{MSC 2020 Classification: 05A15, 05A10.}

\end{abstract}

\section{Introduction}
The present article originated in the following question: given $k\in \N$, what is the combinatorial interpretation of the power series $F(x)$ that solves the equation
\begin{equation}
\label{eq:question}
	\e^{F(x)} = 1+ x\, \e^{k F(x)},
\end{equation} 
and is there a way of computing the coefficients of $F(x)$ by counting suitable labeled combinatorial structures? The question was raised in the context of statistical mechanics for a one-dimensional system of non-overlapping rods on a line~\cite[Section 5.2]{jansen2015tonks}; up to sign flips, the function $F(x)$ corresponds to the pressure of a gas of rods of length $k$ and activity $x$ on the discrete lattice $\Z$. 

The exponential $\exp(F(x))$ is easily recognized as the generating function for (generalized) Catalan numbers, whose definition we recall below. Thus we are looking for a combinatorial interpretation of the logarithm of the generating function for (generalized) Catalan numbers.
Logarithms of Catalan generating functions have in fact attracted interest since Knuth's Christmas lecture~\cite{knuth_christmas2014}; to the best of our knowledge, the focus has been on the computation of coefficients, with the question of combinatorial interpretation left open. 
 We provide several such interpretations, among them one with cycle-rooted labeled trees. For the interpretation it is essential that we work with \emph{labeled} combinatorial species, as is manifest already for a simple special case: For $k=1$, the solution to~\eqref{eq:question} is 
\[
	F(x) = - \log (1-x) = \sum_{n=1}^\infty \frac{x^n}{n} = \sum_{n=1}^\infty \frac{x^n}{n!}\, (n-1)!.
\] 
As $1/n$ is not an integer, the function $F$ is not an ordinary generating function, but it is the exponential generating function for a labeled structure, namely for cycles.

Let us recall some facts about Catalan numbers.
The sequence of natural numbers ($C_n)_{n\geq 0}$ with
$$C_n:=\frac{(2n)!}{(n+1)!n!},\quad n\geq 0,$$
is commonly referred to as Catalan numbers since the 1970's. The name goes back to Eug{\`e}ne Charles Catalan who was the first to introduce Catalan numbers in the above form, after they already appeared in literature as far back as the 18th century, most prominently in the work of Leonhard Euler.

Catalan numbers emerge in a huge variety of different counting problems: Over 200 possible interpretations are listed in the monograph~\cite{stanley_2015} by R. P. Stanley alone; many of those are of great significance in the field of combinatorics. Two especially prominent types of structures enumerated by Catalan numbers are discrete paths (e.g., Dyck or Motzkin paths) and tree graphs (e.g., binary or plane trees) under certain restrictions,  see items $4-56$ in~\cite[Chapter~2]{stanley_2015}).

The generating function $G_2$ of Catalan numbers $(C_n)_{n\geq 0}$ is given by the formal power series 
$$
G_2(x):=\sum_{n\geq 0}C_n x^n=1+\sum_{n\geq 1}\frac{x^n}{n!}\frac{(2n)!}{(n+1)!}. 
$$
Naturally, one can view $G_2$ as the ordinary generating function for any of the over 200 unlabeled structures in~\cite{stanley_2015} or as the exponential generating function for any of the corresponding labeled structures (in the sense of combinatorial species and associated generating functions, see~\cite{bergeron-labelle-leroux1998book}). In particular, we will view $G_2$ as the exponential generating function for labeled lattice paths (see Section 2) or for labeled binary trees (see Section 3). 

The generating function $G_2$ can be generalized to the following formal power series: For $k\geq 2$, consider the power series $G_k$, sometimes called the \emph{binomial series} \cite{chu2019,prod2019}, given by
$$
G_k(x):=1+\sum_{n\geq 1}\frac {x^n}{n}\binom{kn}{n-1}=1+\sum_{n\geq 1}\frac {x^n}{n!}(n-1)!\binom{kn}{n-1} 
$$ 
and let us refer to the coefficients $$\frac{1}{n}\binom{kn}{n-1},\quad n\geq 1,$$ as \emph{generalized $k^\text{th}$ Catalan numbers} following the terminology in~\cite{hiltped91} (also known under the name of Fuss-Catalan numbers~\cite{lin2011}); notice that the Catalan numbers $(C_n)_{n\geq 1}$ are indeed recovered for $k=2$. The power series $G_k$ satisfies
\[
	G_k(x) = 1+ x G_k(x)^k.
\]
It is well-known (see~\cite{hiltped91}) that generalized $k^\text{th}$ Catalan numbers enumerate monotone lattice paths, the so called $k$-good paths, or alternatively plane $k$-ary trees. Therefore, we can and will interpret $G_k$ as the exponential generating function for labeled lattice paths (see Section 2) or for labeled plane $k$-ary trees (see Section 3).

The main object of study in this paper is the logarithm of the generating function $G_k$ for $k\geq 2$, which again can be represented by a formal power series. Explicit expressions for the coefficients are already known from the literature:  The expansion of $\log G_2$ was presented 2014 in the annual Christmas lecture by Donald Knuth~\cite{knuth_christmas2014} --- who subsequently posed an elegant conjecture for the expansion of $\log^2 G_2$ as a problem in~\cite{knuth2015prob} to be solved by various authors soon after:
\begin{align*}\label{formula:knuth_squared}
\log^2 G_2(x)=\sum_{n\geq 2}\frac{x^n}{n}\binom{2n}{n}(H_{2n-1}-H_n),
\end{align*}
where the \emph{harmonic numbers} $(H_m)_{m\in\N}$ are given by $H_m:=\sum_{i=1}^m \frac{1}{i}$ for $m\in\N$. 

Higher powers $\log^a G_k$, $a\geq 2$, were examined in~\cite{chu2019} and~\cite{prod2019}, explicit formulas for the coefficients were derived --- in terms of harmonic numbers in the former and in terms of Stirling cycle numbers in the latter work. The proofs are of algebraic nature and involve general inversion formulas --- in particular, the Lagrange inversion formula. 

%
 
Here, we present a combinatorial, bijective proof providing explicit expressions for the coefficients of $\log G_k$ by means of exact enumeration. The proof easily generalizes to the case of the higher powers $\log^a G_k$, $a\geq 2$. For example, in the aforementioned case of the squared logarithm $\log^2 G_k$, we obtain $$[x^n]\log^2G_k(x)=2\sum_{p=2}^n \frac{k-1}{kn-p}\binom{kn-p}{n-p}H_{p-1}.$$

Naturally, this expression for the coefficients can be rewritten to match the one by Knuth presented above. In the general case $a\geq 1$, we get the formula
$$
[x^n]\log^aG_k(x)=\sum_{p=a}^n c^{(p)}_{k,n}N_{p,a},
$$
where 
$$
c^{(p)}_{k,n}=\frac{kp-p}{kn-p}\binom{kn-p}{n-p}
$$
and, using $[n]:=\{1,\ldots,n\}$ for $n\in\N$,
$$
N_{p,a}:=\sum_{\substack{(q_1,\ldots,q_a)\in [p]^a\\q_1+\ldots+q_a=p}}\frac{1}{\prod_{i=1}^{a} q_i}.
$$

While identifying the coefficients of $\log^a G_k$ for $k\geq 2$ and $a\geq 1$ is not a novel result (since those are known from~\cite{chu2019} and~\cite{prod2019}), we think that our proof itself is of interest --- as we are not aware of any alternative proof that is essentially non-algebraic in nature.

The article is organized as follows: In Section 2, we provide a combinatorial interpretation of $\log G_k$ in terms of lattice paths (Theorem~\ref{thm:bij_paths}) by using bijective results identifying lattice paths with sets of certain paths or path-like structures (Lemma~\ref{lem:bb} and Lemma~\ref{lem:bij:orn_to_min}). Additionally, we use this interpretation to provide a closed expression for coefficients of $\log G_k$ (Theorem~\ref{thm:orn_count}) via a purely combinatorial proof, which can be easily generalized to higher powers $\log^a G_k$, $a\geq 2$ (Theorem~\ref{thm:orn_count_h}). In Section 3 we provide an alternative interpretation of $\log G_k$ in terms of plane trees (Theorem~\ref{thm:main1}). Again, at the heart of this interpretation is a bijective result identifying $k$-ary trees with sets of certain trees or tree-like structures (Lemma~\ref{lem:bij:tree_to_forest} and Lemma~\ref{lem:bij:circ_to_min}). Finally, in the appendix, a method to encode both lattice paths and plane trees via \emph{cyclically ordered multisets} is introduced, providing a bijection between the two combinatorial species and establishing a direct connection between the two combinatorial interpretations of $\log G_k$.

\section{Combinatorial interpretation via lattice paths}\label{sec:2}

\subsection{Lattice paths and associated generating functions}
In this section, we want to consider a combinatorial interpretation of (generalized) Catalan numbers in terms of monotone lattice paths and understand the logarithm of the corresponding generating functions on the level of these combinatorial structures. We concentrate on item $24$ in~\cite[Chapter~2]{stanley_2015}, but consider labeled structures instead of unlabeled.

\begin{definition}[Labeled good paths]
Let $n\in\N$ and let $k\geq 2$. Let $V\subset \N$ be a finite label set with $\vert V\vert=n$. A path on the quadratic lattice $\Z^2$ from $(0, 0)$ to $(n, (k-1)n)$ with steps $(0, 1)$ or $(1, 0)$, together with a labeling of the heights $\{(k-1)j\}_{0\leq j\leq n-1}$ by elements of $V$ (as visualized in Figure~\ref{fig:path_repr_p=2}), is called a $V$-labeled $k$-good path if it never rises above the line $y = (k-1)x$. Denote  the set of all such paths by $\mathscr P_k(V)$ and write $\mathscr P_k(n):=\mathscr P_k([n])$.
\end{definition}

\begin{rem}
By labeling we mean a bijective map from $\{(k-1)j\}_{0\leq j\leq n-1}$ to $V$. Notice these heights are exactly those on which the path can potentially intersect the diagonal $y=(k-1)x$.
\end{rem}

\begin{rem} 
Our notion of (unlabeled) good paths is essentially the same as introduced in \cite{hiltped91}, up to a vertical shift of the path by 1. Notice that, by~\cite{hiltped91}, $G_k$ --- as the generating function for $k^\text{th}$ generalized Catalan numbers ---  is equal to the exponential generating function for $(\mathscr P_k(n))_{n\in\N_0}$, i.e.,
$$
	G_k(x)=1+\sum_{n\geq 1}\frac{x^n}{n!}\vert \mathscr P_k(n) \vert.
$$ 
\end{rem}
\begin{figure}[ht]
\centering
\includegraphics[scale=0.4]{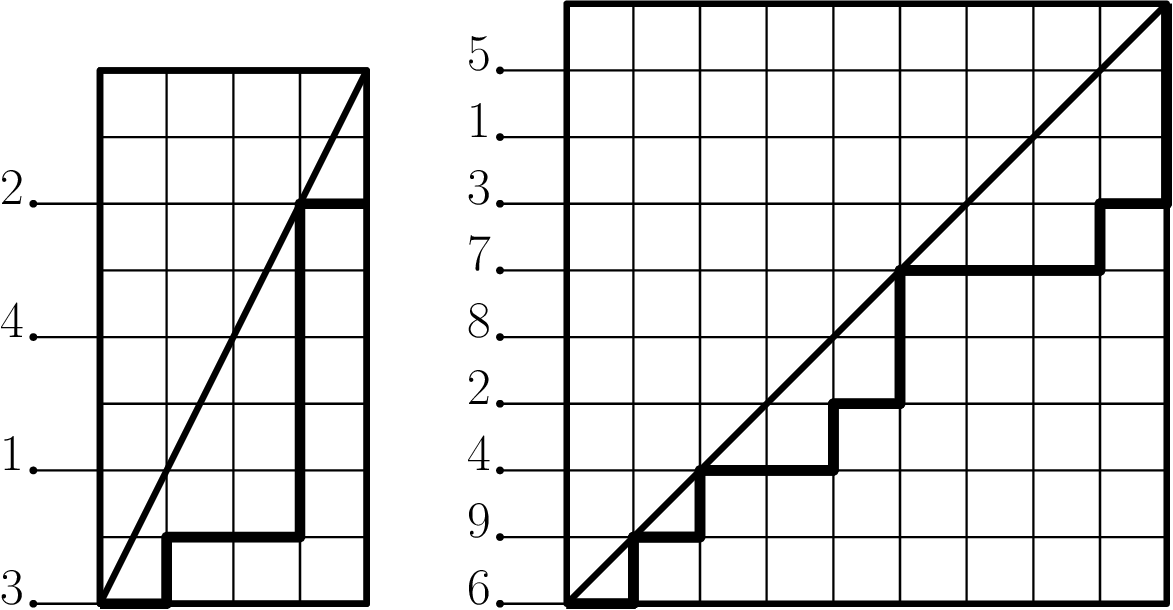}
\caption{On the left side, we see a $3$-good lattice path of length $12$ (labeled by $\{1,2,3,4\}$); on the right side, we see a $2$-good lattice path of length $18$ (labeled by $\{1,\ldots, 9\})$.}\label{fig:path_repr_p=2}
\end{figure}

Next, we want to introduce combinatorial structures that are enumerated by the coefficients of $\log G_k$.

\begin{definition}[Label-minimal good paths] Let $n\in\N$ and let $k\geq 2$. Let $V\subset \N$ be a finite label set with $\vert V\vert=n$. A $V$-labeled $k$-good path $P$ is called label-minimal if the label of the height 0 is minimal under all labels labeling heights at which $P$ intersects the diagonal $y = (k-1)x$. 

Denote the set of $V$-labeled $k$-good paths that are label-minimal by $\mathscr P^{\min}_k(V)$ and write $\mathscr P^{\min}_k(n):=\mathscr P^{\min}_k([n])$. The corresponding exponential generating function is defined by the following formal power series:
$$
G^{\min}_k(x)=\sum_{n\geq 1}\frac{x^n}{n!}\vert\mathscr P^{\min}_k(n)\vert.
$$

Let $1\leq \ell\leq n$ and let $B_1\cup\ldots\cup B_\ell$ be a partition of $[n]$. For every $i\in[\ell]$, let $P_i\in \mathscr P^{\min}_k(B_i)$. The set $\{P_1,\ldots, P_\ell\}$ is called a label-minimal $k$-field on $[n]$. Denote the set of all label-minimal $k$-fields on $[n]$ by $\mathscr F^{\min}_k(n)$.
\end{definition}

\begin{figure}[ht]
\centering
\includegraphics[scale=0.4]{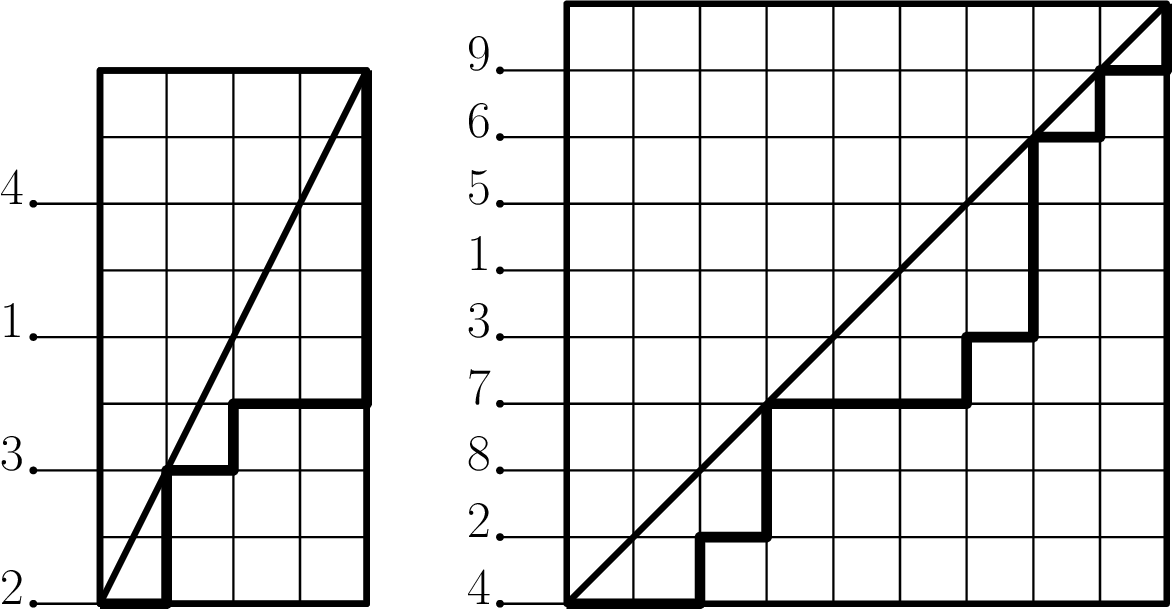}
\caption{On the left side, we see a label-minimal $3$-good lattice path of length $12$ (labeled by $\{1,2,3,4\}$); on the right side, we see a label-minimal $2$-good lattice path of length $18$ (labeled by $\{1,\ldots, 9\})$.}\label{fig:mpath_repr_p=2}
\end{figure}

Alternatively, just like the logarithm of the exponential generating function for permutations can be interpreted as the exponential generating function for cycles (as explained in the introduction), one can interpret $\log G_k$ via certain cyclic structures as well. Informally speaking, those cyclic structures can be obtained by ``bending $k$-good paths into circles", i.e., by identifying endpoints of $[n]$-labeled $k$-good paths with their starting points and keeping the labelings (which thus become cycles on $[n]$).
 
\begin{definition}[Labeled ornaments]\label{def:ornaments}
Let $n\in\N$ and let $k\geq 2$. Let $V$ be a finite label set with $\vert V\vert=n$. For $P\in\mathscr P_k(V)$ construct a labeled infinite lattice path $\hat P$ by taking (infinitely many) labeled paths $j(n,(k-1)n)+P$, $j\in\Z$, and concatenating them (while keeping the labeling). 

An equivalence relation on the set $\mathscr P_k(V)$ can be defined as follows: Let two $V$-labeled $k$-good paths $P_1$ and $P_2$ be equivalent if and only if $\hat P_1$ is a translate of $\hat P_2$ along the line $y=(k-1)x$ (including the labeling).

The corresponding equivalence classes $[P]$ can be identified with the shape of the infinite periodic paths $\hat P$ together with an infinite periodic labeling (i.e., a cycle on $[n]$) which are obtained by identifying the endpoint and the starting point of $P$.

Denote the set of the equivalence classes, called $V$-labeled $k$-ornaments, by $\mathscr P^{\circ}_k(V)$ and write $\mathscr P^{\circ}_k(n):=\mathscr P^{\circ}_k([n])$. The corresponding exponential generating function is defined by the following formal power series:
$$
G^\circ_k(x)=\sum_{n\geq 1}\frac{x^n}{n!}\vert\mathscr P^{\circ}_k(n)\vert.
$$

Let $1\leq \ell\leq n$ and let $B_1\cup\ldots\cup B_\ell$ be a partition of $[n]$. For every $i\in[\ell]$, let $O_i$ be a $B_i$-labeled $k$-ornament. The set $\{O_1,\ldots, O_\ell\}$ is called a $k$-ornament field on $[n]$. Denote the set of all $k$-ornament fields on $[n]$ by $\mathscr F^\circ_k(n)$.
\end{definition}

\begin{figure}[ht]
\centering
\includegraphics[scale=0.4]{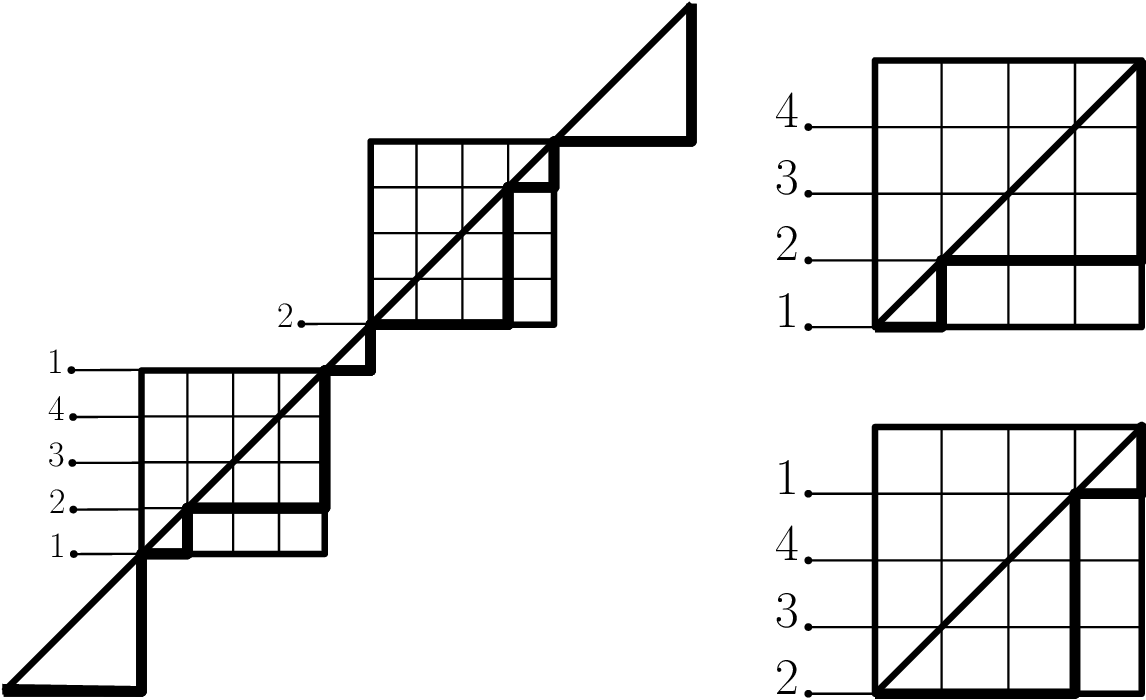}
\caption{Both $2$-good paths of length $8$ depicted on the right side are representatives of the 2-ornament depicted on the left side.}\label{fig:orn_repr}
\end{figure}

Now, with these definitions at hand, we are ready to give a combinatorial interpretation for $\log G_k$ in terms of label-minimal $k$-good paths or, alternatively, in terms of $k$-ornaments. 

\subsection{Bijective results}

The following lemma provides the combinatorial insight essential to the proofs of the main results in this section: It enables us to identify labeled good paths with sets of label-minimal good paths.

\begin{lemma}\label{lem:bb}Let $n\in\N$ and let $k\geq 2$. There is a bijection between $\mathscr P_k(n)$ and $\mathscr F^{\min}_k(n)$.  
\end{lemma}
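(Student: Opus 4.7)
The plan is to construct the bijection by cutting each $k$-good path at a carefully selected subset of its touchpoints with the diagonal $y = (k-1)x$. Naively splitting at every touchpoint is too fine --- two paths that differ only in the order of their excursions would collapse to the same unordered field --- so the cut points must be determined by reading the labels placed on the diagonal.

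Given $P \in \mathscr{P}_k(n)$, I would enumerate in order of increasing $x$-coordinate the points $(j_0,(k-1)j_0), \ldots, (j_r,(k-1)j_r)$ at which $P$ meets the diagonal with $0 \le j_s \le n-1$ (the endpoint is excluded since its height is not labeled) and record the labels $a_0, \ldots, a_r$ assigned by $P$ to the heights $(k-1)j_0, \ldots, (k-1)j_r$, respectively. Then I would mark the indices $0 = m_0 < m_1 < \cdots < m_{\ell-1}$ of the left-to-right minima of the sequence $(a_0, \ldots, a_r)$ and cut $P$ into sub-paths $P_1, \ldots, P_\ell$ at the corresponding touchpoints, where $P_t$ runs from $j_{m_{t-1}}$ to $j_{m_t}$ (with the convention $j_{m_\ell} := n$) and is then translated to start at the origin. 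The critical observation is that each $P_t$ lies in $\mathscr{P}^{\min}_k(B_t)$ for its block $B_t$ of labels: any label $a_i$ with $m_{t-1} < i < m_t$ fails to be a left-to-right minimum, so there exists $i' < i$ with $a_{i'} < a_i$, and a short induction on $i$ (splitting into the cases $i' \le m_{t-1}$ and $m_{t-1} < i' < i$) yields $a_{m_{t-1}} < a_i$. Hence the starting label of $P_t$ is strictly smaller than every other touchpoint label inside $P_t$.

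For the inverse map I would associate to each label-minimal piece $P_t$ in a given field its starting label $a^{(t)}$, sort the pieces so that the starting labels are strictly decreasing, and then concatenate them in that order. The resulting walk stays weakly below $y = (k-1)x$ because each piece does so on its own, so it is a valid element of $\mathscr{P}_k(n)$. To show the two maps are mutual inverses, I would verify that in the concatenated path the left-to-right minima of the labeled touchpoint sequence are precisely the starting labels $a^{(\sigma(1))} > \cdots > a^{(\sigma(\ell))}$ of the successive pieces: an internal touchpoint of $P_{\sigma(t)}$ carries a label strictly greater than $a^{(\sigma(t))}$ by label-minimality, hence cannot fall below any earlier starting label and cannot be a left-to-right minimum of the concatenation.

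The main obstacle is calibrating the cut rule so that it simultaneously produces label-minimal pieces and admits a canonical reassembly from the unordered field. The left-to-right-minimum criterion is the correct calibration precisely because label-minimality of a piece forces its internal touchpoint labels to sit strictly above its own starting label, which is exactly the property needed to prevent them from masquerading as left-to-right minima of the reassembled path. Once this compatibility is in place, the verifications in both directions reduce to straightforward bookkeeping.
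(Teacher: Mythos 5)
Your construction is essentially identical to the paper's: cutting at the left-to-right minima of the touchpoint label sequence produces exactly the same decomposition as the paper's iterative rule of repeatedly splitting off the segment up to the first diagonal touchpoint whose label is smaller than the current starting label, and the inverse (sort pieces by decreasing starting label, then concatenate) is the same in both. Your write-up is correct and in fact spells out the mutual-inverse verification in more detail than the paper does.
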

\begin{proof}
Let us define a bijection $m:\mathscr P_k(n)\to\mathscr F^{\min}_k(n)$. For a $k$-good path $P\in\mathscr P_k(n)$, we obtain a label-minimal $k$-field $m(P)\in \mathscr F^{\min}_k(n)$ from $P$ by the following inductive procedure:

\begin{enumerate}
\item [Step $0$:] Set $\Pi=P$.  
\item [Step $N\geq 1$:] 

Let $0=y_1<\ldots<y_\ell$ denote the heights at which the path $\Pi$ intersects the line $y=(k-1)x$ and let $i_1,\ldots,i_\ell\in[n]$ denote the corresponding labels. 
\begin{itemize}
\item If there exists a $j\in[\ell]$ such that $i_j<i_1$ holds, set $y:=\min \{y_j:j\in[\ell],\ i_j<i_1\}$ and set $\Pi=P$. Cut the path $\Pi$ at the height $y$, obtaining two paths --- a path $\Pi_1$ from $(0,0)$ to $(\frac{y}{k-1},y)$ and a path $\Pi_2$ starting at $(\frac{y}{k-1},y)$  which inherit their labelings from $\Pi$. $\Pi_1$ and $\Pi_2$ are again $k$-good paths --- up to a translation of $\Pi_2$. Replace $\Pi$ with the translate of $\Pi_2$ starting in $(0,0)$ and GOTO Step N+1.

\item Otherwise STOP.
\end{itemize}
\end{enumerate}

 Naturally, this procedure produces a label-minimal $k$-field on $[n]$. 

Conversely, given a label-minimal $k$-field $F\in\mathscr F^\circ_k(n)$, construct an $[n]$-labeled $k$-good path $m^{-1}(F)\in\mathscr P_k(n)$ as follows: Order the labeled $k$-good paths from $F$ decreasing in the label at $y=0$. Successively, glue the predecessor path to the successor path by concatenation (identifying the endpoint of the former with the starting point of the latter). Naturally, the resulting lattice path is an $[n]$-labeled $k$-good path and the described procedure does indeed define the inverse of the map $m$ introduced above.
\end{proof}

\begin{rem}\label{rem:sa_1}
Clearly, our choice of label-minimal paths is somewhat arbitrary in the following sense: In the inductive procedure from Lemma~\ref{lem:bb} defining the map $m$, one can choose different rules to ``cut'' the path $P$ at its intersections with the diagonal. E.g., one could instead consider ``label-maximal'' paths (or, more generally, define $y$ as the height labeled minimally with respect to an arbitrary order on the labels instead of the canonical one).
\end{rem}

\begin{figure}[ht]
\centering
\includegraphics[scale=0.4]{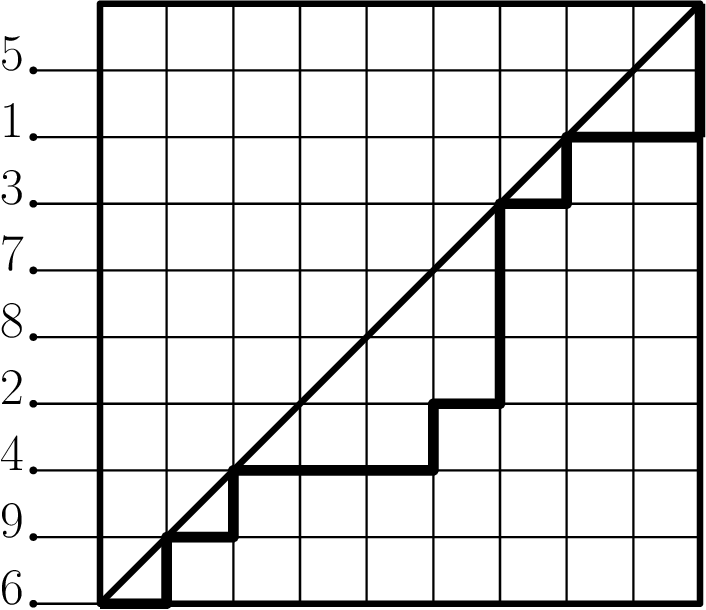}\qquad\includegraphics[scale=0.4]{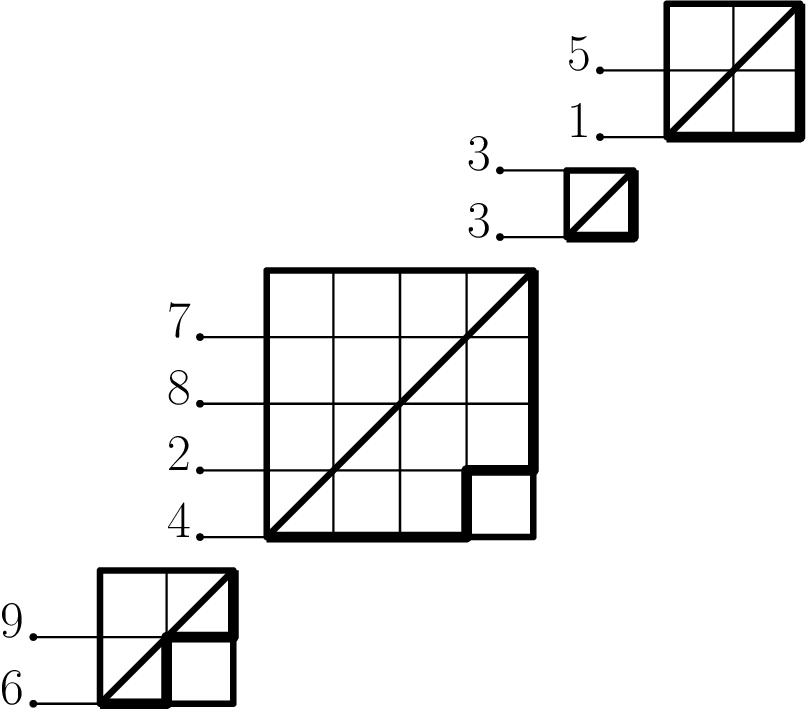}
\caption{On the left, we see a $2$-good path of length 18, on the right we see the label-minimal 2-field corresponding to it in the sense of the proof of Lemma~\ref{lem:bb}.}\label{fig:orn_bij}
\end{figure}

This bijective result allows us to interpret $\log G_k$ as the exponential generating function for label-minimal good paths: 

\begin{theorem}\label{thm:bij_paths}Let $k\geq 2$. The following holds as an identity between formal power series:
$$\log G_k=G^{\min}_k.$$ 
\end{theorem}
\begin{proof}
The claim follows directly from Lemma~\ref{lem:bb} via a standard combinatorial argument (see, e.g.,~\cite{faris2010combinatorics}, for the argument formulated in the framework of combinatorial species). 
\end{proof}

\begin{lemma}\label{lem:bij:orn_to_min}
For $n\in\N$ and $k\geq 2$, there is a bijection between the sets $\mathscr P^{\min}_k(n)$ and $\mathscr P^{\circ}_k(n)$.
\end{lemma}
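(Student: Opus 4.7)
The plan is to exhibit the bijection directly via a ``rotate to minimum'' procedure, in the same spirit as the classical Cycle Lemma: a label-minimal path should be exactly the canonical representative of its ornament class.

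First I would define $\Phi:\mathscr P^{\min}_k(n)\to\mathscr P^{\circ}_k(n)$ by $\Phi(P):=[P]$; this is obviously well-defined, and all the content lies in constructing the inverse. To build $\Phi^{-1}$, I would fix any representative $P$ of an ornament class $[P]\in\mathscr P^{\circ}_k(n)$ and let $0=y_1<y_2<\cdots<y_m$ be the heights at which $P$ intersects the diagonal $y=(k-1)x$, with associated labels $\ell_1,\ldots,\ell_m\in[n]$. The essential observation is that, by Definition~\ref{def:ornaments}, the class $[P]$ is in bijection with the set of admissible base-point shifts along the diagonal, and these admissible shifts correspond precisely to translating $\hat P$ so that one of its diagonal crossings lands at the origin. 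Thus $[P]$ has exactly $m$ elements in $\mathscr P_k(n)$; in the representative obtained by shifting the base point to the $i$-th crossing, the label at the new height $0$ is the original $\ell_i$ (and all other labels rotate accordingly). Since $\ell_1,\ldots,\ell_m$ are pairwise distinct elements of $[n]$, there is a unique index $i^{\star}$ with $\ell_{i^{\star}}=\min_i\ell_i$, and the corresponding representative is by definition the unique label-minimal element of $[P]$. Setting $\Phi^{-1}([P])$ equal to this representative gives the inverse map.

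The main step to verify carefully is the claim that $[P]$ really has exactly $m$ elements of $\mathscr P_k(n)$, indexed bijectively by the diagonal crossings of $\hat P$ within one period, with labels permuted cyclically as described. I expect this to be routine bookkeeping: translations of $\hat P$ that do not send a diagonal crossing to $(0,0)$ cannot produce a path starting at $(0,0)$, so they contribute no new representative; conversely, the distinct diagonal crossings within one period yield $m$ genuinely distinct $[n]$-labeled paths because the labels at those crossings are distinct. Once this is established, injectivity and surjectivity of $\Phi$ are immediate from the uniqueness of the minimum-labeled representative, completing the bijection.
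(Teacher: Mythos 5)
Your proposal is correct and follows essentially the same route as the paper: the map is $P\mapsto[P]$, and invertibility comes from the fact that each ornament class has a unique label-minimal representative, selected by rotating the base point to the diagonal crossing with the smallest label. The extra bookkeeping you supply (that $[P]$ has exactly $m$ representatives indexed by the diagonal crossings within one period, with pairwise distinct labels) is a correct and slightly more explicit justification of the uniqueness claim the paper states without elaboration.
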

\begin{proof}
The bijection is given by assigning to the label-minimal path $P\in \mathscr P^{\min}_k(n)$ its equivalence class $[P]\in \mathscr P^{\circ}_k(n)$. This map is clearly invertible since every element of $\mathscr P^{\circ}_k(n)$ has a unique representative $P\in \mathscr P^{\min}_k(n)$ that is label-minimal. 
\end{proof}
\begin{rem}
Again, we see that the choice of label-minimal paths was somewhat arbitrary: In the above proof, one could identify $[P]\in\mathscr P^{\circ}_k(n)$ with a representative different from $P$, e.g., with the ``label-maximal'' path in $[P]$, see Remark~\ref{rem:sa_1}.
\end{rem}

The lemma allows us to identify $\log G_k$ with the exponential generating function for labeled $k$-ornaments:
\begin{theorem}\label{thm:bij_paths_2}Let $k\geq 2$. The following holds as an identity between formal power series:
$$\log G_k=G^\circ_k.$$ 
\end{theorem}
\begin{proof}
The claim follows from Theorem~\ref{thm:bij_paths} and Lemma~\ref{lem:bij:orn_to_min} since the latter implies that $G^{\circ}_k=G_k^{min}$ for $k\geq 2$.
\end{proof}

We have shown how taking the logarithm of the generating function for $k^{\text{th}}$ Catalan numbers $G_k$ can be interpreted on the level of lattice paths. By Theorem~\ref{thm:bij_paths}, it can be interpreted as the exponential generating function for label-minimal $k$-good paths --- so that taking the logarithm of $G_k$ corresponds to discarding those $k$-good paths that have labels at height $0$  which are not minimal among the labels labeling intersections of the path with the diagonal $y=(k-1)x$. Alternatively, by Theorem~\ref{thm:bij_paths_2}, $\log G_k$ can be interpreted as the exponential generating function for $k$-ornaments --- so that taking the logarithm corresponds to identifying those $k$-good paths that result in the same $k$-ornament when they are ``bent into a circle''.

\subsection{Identifying the coefficients}
Lemma~$\ref{lem:bb}$ also provides an elementary way to recover the explicit expressions for the coefficients of $\log G_k$ for every $k\geq 2$ (known from~\cite{jansen2015tonks, knuth_christmas2014}) --- by simply counting $k$-ornaments.

\begin{theorem}\label{thm:orn_count}
Let $k\geq 2$. We have $$\log G_k(x)=\sum_{n\geq 1} \frac{x^n}{n!}\frac{(kn-1)!}{(kn-n)!}.$$
\end{theorem}
\begin{proof}
A well-known result (see, e.g.,~\cite{drube2022}) provides the number $c^{(p)}_{k,n}$ of Dyck paths of length $kn$ with exactly $p\in\N$ returns to zero (which corresponds to the number of unlabeled $k$-good paths of length $kn$ that intersect the diagonal $y=(k-1)x$ exactly $p+1$ times):
$$c^{(p)}_{k,n}=\frac{kp-p}{kn-p}\binom{kn-p}{n-p}.$$
Notice that if some $k$-good lattice path $P$ intersects the line $y=(k-1)x$ exactly $p+1$ times then the same holds for every path in $[P]$ and $\vert[P]\vert=p$ (since choosing a representative of $[P]$ is equivalent to choosing which intersection point to place at $y=0$). Therefore, the number of $[n]$-labeled $k$-ornaments intersecting the diagonal $y=(k-1)x$ exactly $p$ times (for any representative, counting starting point and endpoint as one intersection) is given by $$n!\frac{c^{(p)}_{k,n}}{p}$$ and  thus
we get 
$$
[x^n]\log G_k(x)=\sum_{p=1}^n \frac{c^{(p)}_{k,n}}{p}=\sum_{p=1}^n \frac{k-1}{kn-p}\binom{kn-p}{n-p}=\frac{1}{n!}\frac{(kn-1)!}{(kn-n)!}.
$$
\end{proof}

The presented proof of the preceding theorem has the following advantage: It can be easily modified to investigate the coefficients of $\log^a G_k$ for higher powers $a\geq 2$. As mentioned in the introduction, the result itself is not novel and similar expressions for the coefficients are known from~\cite{chu2019, prod2019}. 

\begin{theorem}\label{thm:orn_count_h}
Let $a, k\geq 2$ and $n\in\N$. We have
$$
[x^n]\log^aG_k(x)=\sum_{p=a}^n c^{(p)}_{k,n}N_{p,a},
$$
where 
$$
c^{(p)}_{k,n}=\frac{kp-p}{kn-p}\binom{kn-p}{n-p}
$$
and
$$
N_{p,a}:=\sum_{\substack{(q_1,\ldots,q_a)\in [p]^a\\q_1+\ldots+q_a=p}}\frac{1}{\prod_{i=1}^{a} q_i}.
$$
\end{theorem}
\begin{rem}In the special case $a=2$, considered by Knuth in~\cite{knuth2015prob}, we get 
$$[x^n]\log^2G_k(x)=\sum_{p=2}^n c^{(p)}_{k,n}\sum_{1\leq q\leq p-1} \frac{1}{q(p-q)}=2\sum_{p=2}^n \frac{k-1}{kn-p}\binom{kn-p}{n-p}H_{p-1},$$
where $(H_m)_{m\in\N}$ are the harmonic numbers defined in the introduction.
\end{rem}
\begin{proof} By Theorem~\ref{thm:bij_paths} and by a standard combinatorial argument, $\log^a G_k$ is the exponential generating function for $k$-ornament fields consisting of $a\geq 2$ $k$-ornaments. For every $n\in\N$, we need to determine the number of such $k$-ornament fields on $[n]$. To do so, we employ the same decomposition as in the proof of Theorem~\ref{thm:orn_count} sorting the k-ornament fields by the total number of intersections with the diagonal $y=(k-1)x$ (in any corresponding set of representatives). So, let  $\hat N^{(k,n)}_{p,a}$ denote the number of $k$-ornament fields on $[n]$ consisting of precisely $a\geq 2$ $k$-ornaments such that in total there are $p$ intersections with the diagonal $y=(k-1)x$ (for any representative, counting starting point and endpoint as one intersection). Then
$$
[x^n]\log^a G_k(x)=\frac{a!}{n!}\sum_{p=a}^n\hat N^{(k,n)}_{p,a}.
$$
In the proof of Theorem~\ref{thm:orn_count}, we already established that $$n!\frac{c^{(p)}_{k,n}}{p}$$ is the number of $[n]$-labeled $k$-ornaments $O$ intersecting the diagonal $y=(k-1)x$ exactly $p$ times (for any representative, counting starting point and endpoint as one intersection). We now want to determine how many $k$-ornament fields of precisely $a\geq 2$ $k$-ornaments correspond to each such $k$-ornament $O$ --- in the sense that they can be obtain by cutting $O$ at precisely $a\geq 2$ intersections with the diagonal $y=(k-1)x$. This number is exactly the number of possible decompositions of a cycle of length $p$ into $a\geq 2$ segments which is given by
$$
\sum_{\substack{(q_1,\ldots,q_a)\in [p]^a\\q_1+\ldots+q_a=p}}\frac{p}{a},
$$
where the tuple $(q_1,\ldots,q_a)$ corresponds to the lengths of the segments, the factor $p$ corresponds to the possible choice of the starting point for the first segment and the factor $\frac{1}{a}$ is due to the fact that there are $a\geq 2$ sequences $(q_1,\ldots,q_a)$ corresponding to the same cycle on $\{q_1,\ldots,q_a\}$.

Left to notice is the following: Consider a $k$-ornament field of $a\geq 2$ $k$-ornaments and let the corresponding numbers of intersections with the diagonal $y=(k-1)x$ be given by a fixed sequence $(q_1,\ldots,q_a)$ with $q_1+\ldots+q_a=p$. From how many distinct $k$-ornaments intersecting the diagonal $y=(k-1)x$ precisely $p$ times can this $k$-ornament field be obtained by the cutting procedure described above? Naturally, this is equivalent to asking how many different cycles on $[p]$ can be cut to obtain a set of $a\geq 2$ cycles with lengths $(q_1,\ldots,q_a)$ and the answer is just given by the number $(a-1)!\prod_{i=1}^a q_i$.

Thus the number $\hat N^{(k,n)}_{p,a}$ is given by $$\hat N^{(k,n)}_{p,a}=n!\frac{c^{(p)}_{k,n}}{p}\sum_{\substack{(q_1,\ldots,q_a)\in [p]^a\\q_1+\ldots+q_a=p}}\frac{p}{a!\prod^a_{i=1} q_i}$$ and, plugging that in the above expression, we obtain
 $$
[x^n]\log^a G_k(x)=\frac{a!}{n!}\sum_{p=a}^n \left(n!\frac{c^{(p)}_{k,n}}{p}\sum_{\substack{(q_1,\ldots,q_a)\in [p]^a\\q_1+\ldots+q_a=p}}\frac{p}{a!\prod^a_{i=1} q_i}\right)=\sum_{p=a}^n c^{(p)}_{k,n}N_{p,a}.
$$ 
\end{proof}

\section{Combinatorial interpretation via  tree graphs}

\subsection{Tree graphs and associated generating functions}

In this section, we provide an alternative combinatorial interpretation for the logarithm of the binomial series $G_k$ in terms of tree graph structures. To do so, we introduce several sets of labeled graphs.

\begin{definition}[Rooted plane trees]\label{def:1}Let $k\geq 2$. For a finite set $V\subset \N$, we define a rooted plane $k$-ary tree with the vertex set $V$
as follows:  Consider a quadruple $(V,E,r,(\ell(v))_{v\in V})$ such that
\begin{enumerate}
\item $r\in V$, $E\subset \binom{V}{2}$,
\item the graph $(V,E,r)$ is a tree rooted in $r$,
\item for each vertex $v\in V$, the set $C(v)\subset V$ of children of $v$ in $(V,E,r)$ satisfies the constraint $\vert C(v)\vert\leq k$,
\item for each vertex $v\in V$, $\ell(v):C(v) \to \{1,\ldots, k\}$ is an injective map. 

\end{enumerate}

For each vertex $v\in V$, we interpret the numbers $\{1,\ldots, k\}$ as an ordered list of slots potentially available for the children of $v$. We say that the $p^\text{th}$ $v$-slot is occupied by a vertex $j\in V$, if $j\in C(v)$ and $\ell(v)(j)=p\in\{1,\ldots, k\}$. We say that the $p^\text{th}$ $v$-slot is vacant, if such a $j$ does not exist. The slots $\{1,\ldots, k\}$ are visualized in an increasing order from left to right and vacant slots are depicted by small solid (unlabeled) nodes.

We denote the set of rooted plane $k$-ary trees with the vertex set $V$ by $\mathscr T_k(V)$. 
\end{definition}

\begin{rem}
Vacant slots can be interpreted as unlabeled leaf vertices (compare to the \emph{full binary trees} as in~\cite{hiltped91}).
\end{rem}
\begin{figure}[ht]
\centering
\includegraphics[scale=0.4]{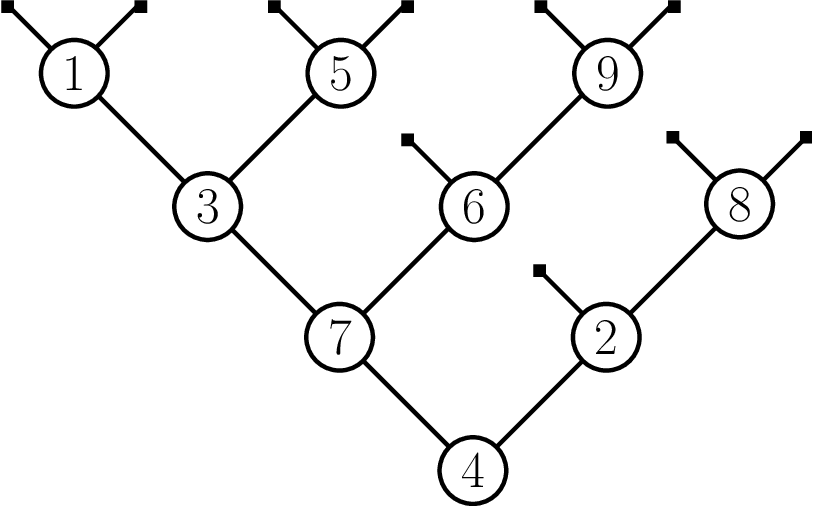}
\caption{Binary ($k=2$) tree with $n=9$ vertices.}\label{binary_tree(n9)}
\end{figure}

\begin{definition}[Root-minimal plane trees]
Let $k\geq 2$. For a finite set $V\subset \N$, let $t$ be a rooted plane $k$-ary tree with the vertex set $V$, i.e., $t\in\mathscr T_k(V)$. We say that a vertex $v\in V$ is \emph{on the rightmost branch of $t$} if $v$ is an element of the vertex set $B\subset V$ defined via the following induction:
\begin{enumerate}
\item Let the root $r\in V$ be in $B$.
\item If a vertex $v\in V$ is in $B$, then the vertex occupying the $k^{\text{th}}$ (rightmost) $v$-slot is in $B$.
\end{enumerate}

We call $t$ \emph{root-minimal} if the root $r\in V$ is smaller (with respect to the canonical order on the natural numbers) than any of the other vertices on the rightmost branch of the tree. The set of root-minimal plane $k$-ary trees is denoted by $\mathscr T_k^{\min}(V)$. We denote the set of root-minimal plane $k$-ary forests with the vertex set $V$ by $\mathscr F_k^{\min}(V)$.
\end{definition}

\begin{figure}[ht]
\centering
\includegraphics[scale=0.4]{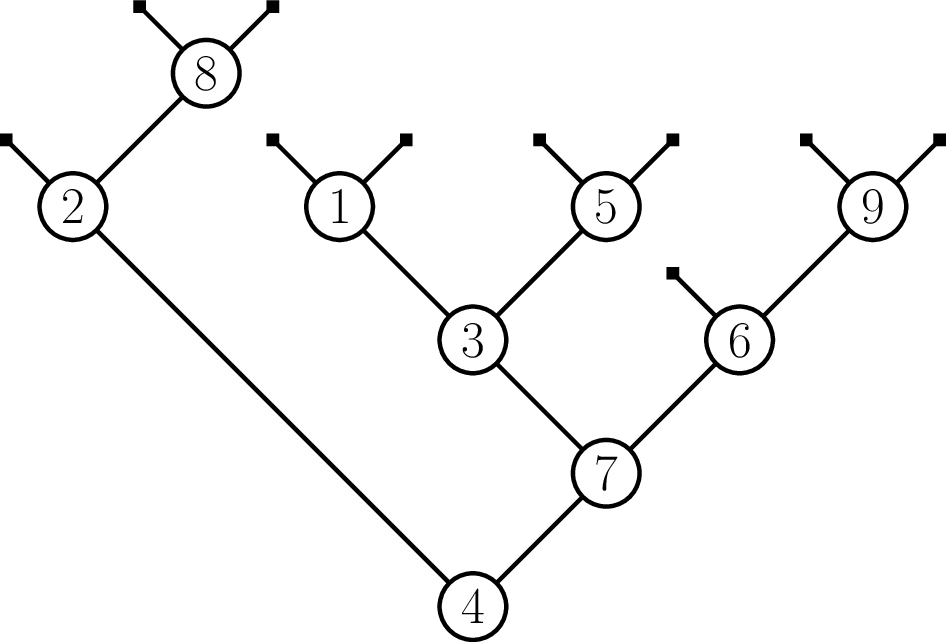}
\caption{Root-minimal binary tree with $n=9$ vertices.}\label{min_rooted_tree(n9)}
\end{figure}

\begin{definition}[Cycle-rooted plane trees] Let $k\geq 2$. For a finite set $V\subset \N$, we define a cycle-rooted plane $k$-ary tree with the vertex set $V$ as follows: Consider a quintuple $(V,E,R, o,(\ell(v))_{v\in V})$ such that

\begin{enumerate}
\item $R\subset V$, $E\subset\binom{V}{2}$,
\item $(R, E\cap\binom{R}{2})$ is the cycle graph associated with the cyclic permutation $o$ on $R$ and is visualized as oriented clockwise,
\item the graph $(V,E\backslash\binom{R}{2},R)$ is a forest of $\vert R\vert$ trees rooted in vertices from $R$,
\item for each vertex $v\in V$, the set $C(v)\subset V$ of children of $v$ in $(V,E\backslash\binom{R}{2},R)$ satisfies the constraint $\vert C(v)\vert\leq k$,
\item for each vertex $v\in V$, $\ell(v):C(v) \to \{1,\ldots, k\}$ is an injective map; we  use the same vocabulary and interpret $\ell(v)$ in the same manner as in Definition~\ref{def:1},

\item For every $r\in R$, the $k^\text{th}$ (rightmost) $r$-slot is vacant.

\end{enumerate}
We denote the set of cycle-rooted plane $k$-ary trees with the vertex set $V$ by $\mathscr T_k^\circ(V)$ and the set of cycle-rooted $k$-ary forests with the vertex set $V$ by $\mathscr F_k^\circ(V)$. 
\end{definition}
\begin{rem}\label{rem:cycle_equiv}
Cycle-rooted trees can be interpreted as equivalence classes of rooted plane trees: Two rooted plane trees are equivalent if and only if they result in the same cycle-rooted tree when we identify the root of the tree with its right-most leaf (the right-most branch therefore becoming the cycle sub-graph in the resulting cycle-rooted tree). In this way a cycle-rooted tree with a cycle of length $r$ corresponds to an equivalence class consisting of $r$ rooted plane trees. Compare this to the definition of $k$-ornaments (Definition~\ref{def:ornaments} in Section~\ref{sec:2}).
\end{rem}
\begin{figure}[ht]
\centering
\includegraphics[scale=0.4]{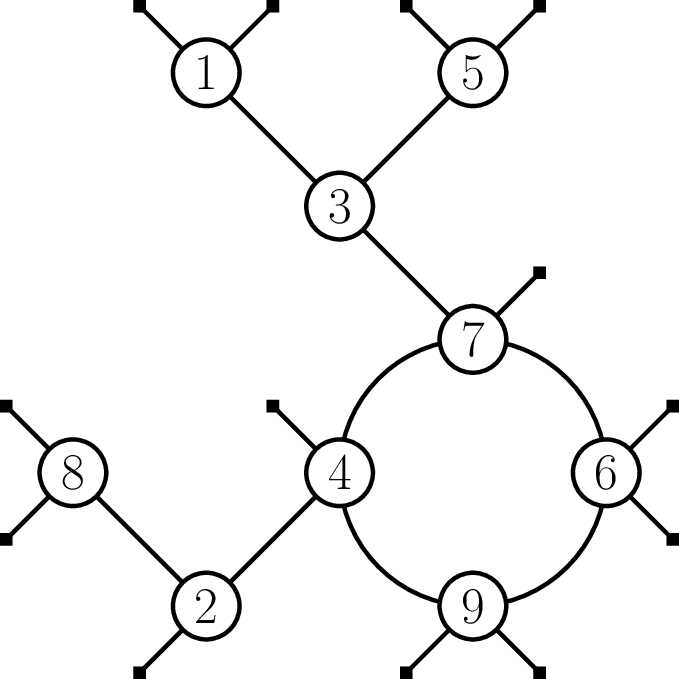}
\caption{Cycle-rooted binary tree with $n=9$ internal vertices.}\label{fig:cycle_rooted_tree(n9)}
\end{figure}
Let $n\in\N$ and let $k\geq 2$. For notational convenience, we set $\mathscr T_k(n):=\mathscr T_k([n])$ and, analogously, write $\mathscr T^{\min}_k (n)$, $\mathscr T^\circ_k (n)$, $\mathscr F^{\min}_k (n)$ and $\mathscr F^\circ_k (n)$ for any $n\in\N$.

As mentioned in the introduction, the exponential generating function for $(\mathscr T_k(n))_{n\in\N}$ is given by the series $G_k$ (see~\cite{hiltped91}), i.e.,
$$
G_k(x)=1+\sum_{n\geq 1}\frac{x^n}{n!}\vert\mathscr T_k(n)\vert.
$$  
Moreover, we denote 
\begin{itemize}

%

\item by $\hat G^{\min}_k$ the exponential generating function for $(\mathscr T^{\min}_k(n))_{n\in\N}$ given by 

$$
\hat G^{\min}_k(x)=\sum_{n\geq 1}\frac{x^n}{n!}\vert\mathscr T^{\min}_k(n)\vert,
$$

\item by $\hat G^{\circ}_k$ the exponential generating function for $(\mathscr T^{\circ}_k(n))_{n\in\N}$ given by

$$
\hat G^{\circ}_k(x)=\sum_{n\geq 1}\frac{x^n}{n!}\vert\mathscr T^{\circ}_k(n)\vert. 
$$
\end{itemize}

\subsection{Bijective results.}

The following lemma is the tree analogue of Lemma~\ref{lem:bb}.
\begin{lemma}\label{lem:bij:tree_to_forest}
Let $n\in\mathbb{N}$ and $k\geq 2$. There is a bijection between the set of $k$-ary trees with $n$ vertices $\mathscr T_k(n)$ and the set of root-minimal $k$-ary forests with $n$ vertices $\mathscr F^{\min}_d(n)$.
\end{lemma}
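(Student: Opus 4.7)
The plan is to mimic the proof of Lemma~\ref{lem:bb}, with the rightmost branch of a $k$-ary tree playing the role of the diagonal intersections of a $k$-good path: the sequence of labels $r = v_0, v_1, \ldots, v_m$ along the rightmost branch of a tree $T \in \mathscr T_k(n)$ corresponds to the labels of the heights at which the associated lattice path meets $y = (k-1)x$, and the vacant $k^{\text{th}}$ slot at $v_m$ corresponds to the path endpoint. I would define the forward map $\mu: \mathscr T_k(n) \to \mathscr F_k^{\min}(n)$ by the following inductive cutting procedure. Set $T' := T$. If $r$ is the minimum label on the rightmost branch $r = v_0, v_1, \ldots, v_m$ of $T'$, then $T'$ is already root-minimal; add it to the forest and terminate. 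Otherwise, let $j \geq 1$ be the least index with $v_j < r$, remove the edge $\{v_{j-1}, v_j\}$, place the component containing $r$ into the forest, and recurse with $T'$ equal to the component rooted at $v_j$. By the minimality of $j$, all of $v_1, \ldots, v_{j-1}$ exceed $r$, so the component containing $r$ is root-minimal; and the cut leaves the $k^{\text{th}}$ slot of $v_{j-1}$ vacant, so the component is a valid element of $\mathscr T_k$.

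Next, I would define the inverse map $\mu^{-1}: \mathscr F_k^{\min}(n) \to \mathscr T_k(n)$ as follows: given a root-minimal forest $F = \{T_1, \ldots, T_s\}$, order its trees by decreasing root labels, $r_1 > r_2 > \cdots > r_s$, and successively graft each $T_{i+1}$ onto the current tree by identifying its root with the vacant $k^{\text{th}}$ slot at the last vertex of the rightmost branch. Such a slot exists because in a root-minimal $k$-ary tree the last vertex on the rightmost branch, by the definition of the rightmost branch, has no $k^{\text{th}}$ child. The result is plainly an element of $\mathscr T_k(n)$.

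Finally, I would check that $\mu$ and $\mu^{-1}$ are mutual inverses, in parallel with the path argument. For $\mu^{-1} \circ \mu = \mathrm{id}$, observe that each cut replaces the current root $r$ by a strictly smaller label $v_j$, so the trees are produced in strictly decreasing order of their roots --- precisely the order in which $\mu^{-1}$ reassembles them. For $\mu \circ \mu^{-1} = \mathrm{id}$, note that after grafting, the rightmost branch of the assembled tree is the concatenation of the rightmost branches of $T_1, \ldots, T_s$ (listed in the chosen order); since each $T_i$ is root-minimal and $r_1 > \cdots > r_s$, the first vertex on this concatenated branch with label smaller than $r_1$ is $r_2$, so the first cut of $\mu$ exactly separates off $T_1$, and an induction on $s$ finishes. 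I expect the main obstacle to be the bookkeeping around which $k^{\text{th}}$ slot becomes vacant after each cut and the corresponding unique grafting site in the inverse; once this is handled, the argument runs parallel to the lattice-path case and no further difficulty is anticipated.
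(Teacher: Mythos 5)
Your proposal is correct and follows essentially the same route as the paper's proof: both cut the rightmost branch at the first vertex whose label is smaller than the current root, producing root-minimal trees with strictly decreasing roots, and both invert by grafting the trees in decreasing root order into the vacant $k^{\text{th}}$ slot at the end of the rightmost branch. Your explicit verification that the two maps are mutually inverse is slightly more detailed than the paper's, which simply asserts it.
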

\begin{proof}
We consider the following map $m$ from $\mathscr T_k(n)$ to $\mathscr F^{\min}_k(n)$. Let $t\in \mathscr T_k(n)$, then we obtain the forest $m(t)\in \mathscr F^{\min}_k(n)$ from $t$ by the following inductive procedure: 
\begin{enumerate}
\item [Step $0$:] \quad Set $i=r$. Set $l=i$.
\item [Step $N\geq 1$:] \quad Let $\# N$ be the number of trees after step $N-1$.

\begin{itemize}
\item If the $k^\text{th}$ (rightmost) $l$-slot is vacant, STOP. 

\item If the $k^\text{th}$ (rightmost) $l$-slot is occupied by a vertex $j\in V$ and $j<i$, then delete the edge $\{l,j\}$, obtaining $\#N+1$ trees, and leave the $k^\text{th}$ $l$-slot vacant. Let all vertices that were roots in the previous step remain roots and let $j$ become the root in the tree to which it belongs. Set $i=j$, $l=i$  and GOTO Step $N+1$.

\item If the $k^\text{th}$ (rightmost) $l$-slot is occupied by a vertex $j\in V$ and $j>i$, then do nothing and the number of trees remains $\#N$. All vertices that were roots in the previous step remain roots. If the $k^\text{th}$ $j$-slot is vacant, STOP. If the $k^\text{th}$ $j$-slot is occupied by some vertex, set $\l=j$ and GOTO Step $N+1$. 
\end{itemize}
\end{enumerate}

Naturally, this procedure produces a forest of root-minimal trees while preserving the vertex set and the offspring constraint $k$, thus the map $m:\mathscr T_k(n)\to\mathscr F^{\min}_k(n)$ is well-defined.

Conversely, given a $k$-ary forest in $\mathscr F^{\min}_k(n)$, one can obtain a tree from it by the following procedure: Order the trees of the forest decreasing in the root numbers (with respect to the canonical order on the natural numbers). From this sequence of trees, we obtain a single tree (with the root given by the largest of the initial roots) by successively attaching the successor tree to the predecessor tree as follows: Let $j$ be the last vertex on the rightmost branch of the predecessor tree. We place the root of the successor tree in the vacant $k^\text{th}$ (rightmost) $j$-slot, leaving the offspring structure unchanged otherwise.  This procedure preserves the vertex set and the offspring constraint $k$ as well, and thus defines a map from $\mathscr F^{\min}_k(n)$ to $\mathscr T_k(n)$ --- which clearly is the inverse for the map $m$ defined above. 
\end{proof}

\begin{rem}Naturally, our choice of root-minimal trees is somewhat arbitrary in the following sense: In the proof of Lemma~\ref{lem:bij:tree_to_forest}, one can choose a different rule to compare the labels $i$ and $j$. E.g., one could instead consider ``maximal-rooted'' trees (or, more generally, use any arbitrary order on the natural numbers instead of the canonical one).
\end{rem}

\begin{figure}[ht]
\centering
\includegraphics[scale=0.4]{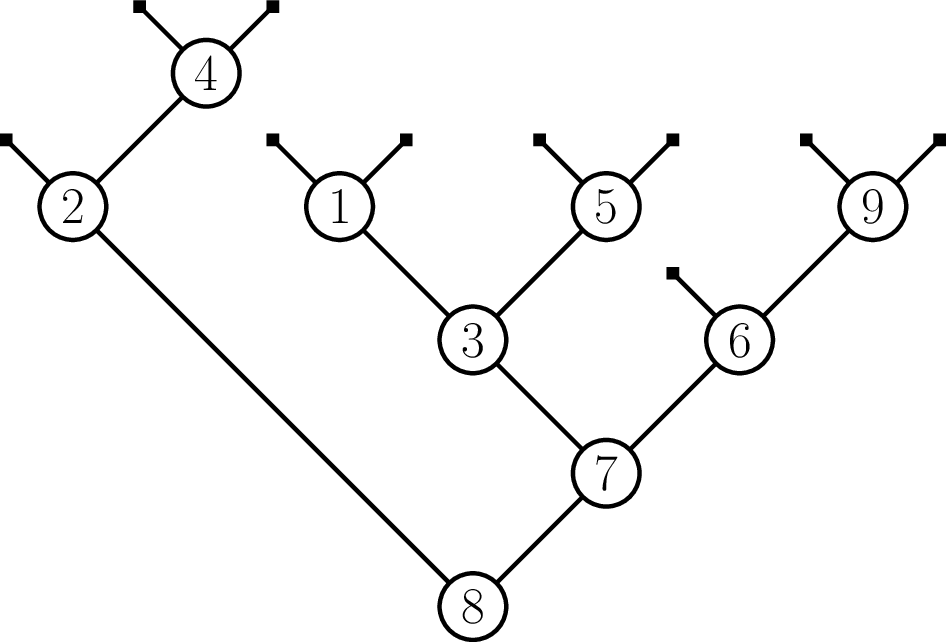}\qquad\qquad
\includegraphics[scale=0.4]{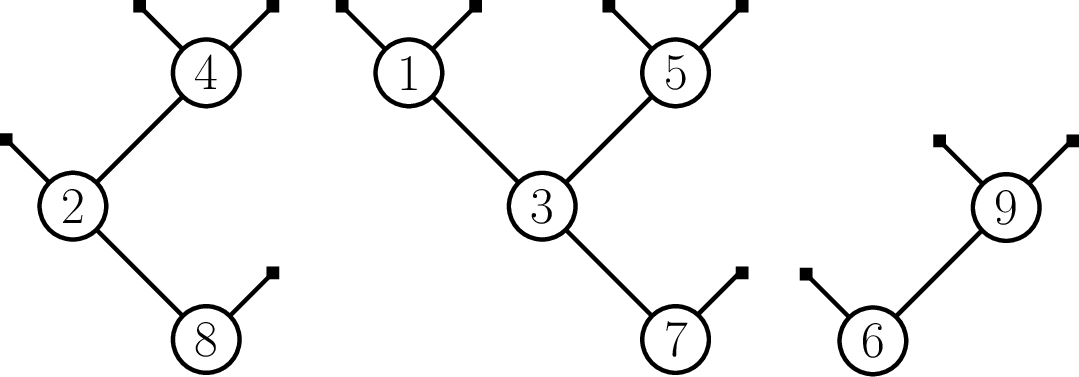}
\caption{On the left side, we see a binary tree with $n=9$ vertices; on the right side, we see the root-minimal binary forest corresponding to it in the sense of the proof of Lemma~\ref{lem:bij:tree_to_forest}.}
\label{fig:lemma_3.1}
\end{figure}

The following theorem is the tree analogue of Theorem~\ref{thm:bij_paths} and a direct consequence of the preceding lemma.
\begin{theorem}\label{thm:main1}Let $k\geq 2$. The following holds as an identity between formal power series:
$$
\log G_k=\hat G^{\min}_k.
$$
\end{theorem}
\begin{proof}Analogously to the proof of Theorem~\ref{thm:bij_paths}, the claim follows directly from Lemma~\ref{lem:bij:tree_to_forest} via a standard combinatorial argument (see, e.g.,~\cite{faris2010combinatorics}, for the argument formulated in the framework of combinatorial species). 
\end{proof}

The following lemma is the tree analogue of Lemma~\ref{lem:bij:orn_to_min}.
\begin{lemma}\label{lem:bij:circ_to_min}
Let $n\in\N$ and $k\geq 2$. There is a bijection between the sets $\mathscr T_k^\circ(n)$ and $\mathscr T^{\min}_k(n)$.
\end{lemma}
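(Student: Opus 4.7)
The plan is to follow the template of Lemma~\ref{lem:bij:orn_to_min}: realize $\mathscr T_k^\circ(n)$ as a set of equivalence classes of rooted plane trees, and then select the unique root-minimal representative from each class. By Remark~\ref{rem:cycle_equiv}, every cycle-rooted plane $k$-ary tree $T \in \mathscr T_k^\circ(n)$ with cycle set $R$ arises as an equivalence class $[t]$ of rooted plane trees under the relation ``identify the root with the rightmost leaf to obtain the same $T$'', and each such class contains exactly $|R|$ representatives, one per choice of cycle vertex to play the role of root.

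Next I would verify that, for any representative $t \in [T]$, the vertex set of the rightmost branch of $t$ coincides with $R$. Indeed, picking a cycle vertex $v \in R$ as root amounts to deleting the edge entering $v$ in the cyclic order $o$, orienting the resulting linear chain away from $v$, and placing each successive cycle vertex into the rightmost slot of its predecessor. This is well-defined precisely because the rightmost slot of every cycle vertex is vacant in $T$, and the resulting rooted plane $k$-ary tree has rightmost branch tracing the vertices of $R$ in cyclic order starting from $v$. Consequently, the root-minimality condition on $t$ singles out exactly one element of $[T]$: the representative obtained by choosing $v = \min R$.

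With this in hand I would define the bijection $\Phi: \mathscr T_k^\circ(n) \to \mathscr T_k^{\min}(n)$ by sending $T$ to this unique root-minimal representative of $[T]$, and its inverse by the construction of Remark~\ref{rem:cycle_equiv}: given $t \in \mathscr T_k^{\min}(n)$, identify the root of $t$ with its rightmost leaf, so that the rightmost branch becomes the cycle $R$ (with the cyclic order inherited from the branch) while every non-branch slot is preserved. Both maps are manifestly well-defined and mutually inverse in view of the preceding paragraph.

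I do not expect any serious obstacle; the argument is essentially a transcription of Lemma~\ref{lem:bij:orn_to_min} to the tree setting. The only step that requires some care is the bookkeeping around the vacant-rightmost-slot condition: one needs to check that the rightmost slots of the rightmost-branch vertices of a rooted plane tree all become vacant under the identification (and, conversely, that undoing the identification in a cycle-rooted tree reoccupies them in the correct order). This follows immediately from the definitions, but I would spell it out once to make the bijection unambiguous.
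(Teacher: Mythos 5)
Your proposal is correct and essentially coincides with the paper's argument: the paper's proof constructs the same map explicitly (cut the cycle at the minimal cycle vertex, make it the root, and let the former cycle become the rightmost branch), and the remark immediately following that proof states precisely your equivalence-class formulation, with invertibility following from the uniqueness of the root-minimal representative in each class. No gap to report.
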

\begin{proof}
We consider the following map $p$ from $\mathscr T_k^\circ(n)$ to $\mathscr T^{\min}_k(n)$. Starting with a cycle-rooted tree $c\in\mathscr T_k^\circ(n)$, one obtains a root-minimal tree $p(c)\in\mathscr T^{\min}_k(n)$ by the following procedure: For every vertex $r\in R$ on the unique cycle in $c$, the $k^\text{th}$ (rightmost) $r$-slot is vacant by definition. Delete the edge $\{i,j\}$ of the cycle which connects the minimal cycle vertex $i\in R$ with its neighbor in the counter-clockwise direction $j\in R$. Let the minimal cycle vertex $i$ now be the root of the resulting tree and, for every $r\in R\backslash\{j\}$, let the $k^\text{th}$ $r$-slot be occupied by the former clockwise neighbor of $r$ on the cycle while leaving the $k^\text{th}$ $j$-slot vacant. That way, the former cycle becomes the rightmost branch of the resulting tree. Otherwise, let the offspring structure be inherited from $c$.  Notice that the resulting tree is indeed in $\mathscr T^{\min}_k(n)$, the map $p$ is thus well-defined.

Conversely, to obtain from a root-minimal tree $t \in\mathscr T^{\min}_k(n)$ a cycle-rooted tree in $\mathscr T_k^\circ(n)$ consider the following procedure: Add an edge between the root $r$ of $t$ and the last vertex of the rightmost branch of $t$, obtaining a cycle. Set $R\subset V$ to be the cycle nodes (that are precisely the vertices on the right-most branch of the original root-minimal tree $t$). For every cycle node $r\in R$, let the $k^\text{th}$ (rightmost) $r$-slot be vacant. Otherwise, for every $v\in V$, let the offspring structure of $v$ be inherited from the map $\ell(v)$ defining $t$. Clearly, this procedure provides the inverse to the map $p$ defined above. 
\end{proof}
\begin{rem}
If we identify the cycle-rooted trees with equivalence classes of trees as hinted in Remark~\ref{rem:cycle_equiv}, then a bijection is given by just assigning to a root-minimal tree $t$ its equivalence class $[t]$. The map is indeed invertible, since every equivalence class has a unique representative which is root-minimal (compare to the proof of Lemma~\ref{lem:bij:orn_to_min}).
\end{rem}

\begin{figure}[ht]
\centering
\includegraphics[scale=0.33]{cycle_rooted_tree_n9__equi.eps}\qquad
\includegraphics[scale=0.33]{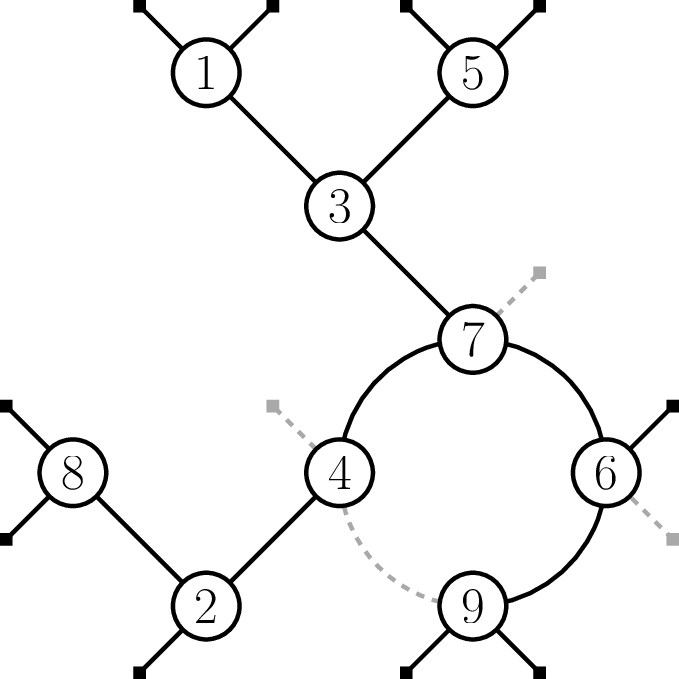}\qquad
\includegraphics[scale=0.33]{min_rooted_tree_n9__equi.eps}
\caption{The cycle-rooted tree from Figure~\ref{fig:cycle_rooted_tree(n9)} (depicted on the left side) corresponds to the root-minimal tree from Figure~\ref{min_rooted_tree(n9)} (depicted on the right side) in the sense of the proof of Lemma~\ref{lem:bij:circ_to_min}. The construction is illustrated in the middle.}
\label{fig:lemma_3.3}
\end{figure}

The following theorem follows immediately from Lemma~\ref{lem:bij:circ_to_min} and Theorem~\ref{thm:main1}. It is the tree analogue of Theorem~\ref{thm:bij_paths_2}:

\begin{theorem}\label{thm:id:logC_circ_trees}Let $k\geq 2$. The following holds as an identity between formal power series:
$$\log G_k=\hat G^{\circ}_k.$$
\end{theorem}
\begin{proof}
The claim follows from Theorem~\ref{thm:main1} and Lemma~\ref{lem:bij:circ_to_min} since the latter implies that $\hat G^{\circ}_k=\hat G_k^{\min}$ for $k\geq 2$.
\end{proof}

We have shown how taking the logarithm of the generating function for $k^{\text{th}}$ Catalan numbers $G_k$ can be interpreted on the level of trees. By Theorem~\ref{thm:main1}, $\log G_k$ can be interpreted as the exponential generating function for root-minimal plane $k$-ary trees --- i.e., taking the logarithm of $G_k$ corresponds to discarding those $k$-ary trees that have roots that are not minimal among the vertices on the right-most branch of the tree. Alternatively, by Theorem~\ref{thm:id:logC_circ_trees}, $\log G_k$ can be interpreted as the exponential generating function for cycle-rooted $k$-ary trees --- so that taking the logarithm corresponds to identifying those trees that result in the same cycle-rooted tree when their right-most branch is ``bent into a circle''.

\appendix
\section{Cyclic multisets: Encoding lattice ornaments and trees}

Here we introduce a way to encode both $k$-ornaments and  cycle-rooted $k$-ary trees by structures we call cyclically ordered multisets. The rough idea of the encoding is best explained starting from binary rooted trees. Each internal vertex (except for the root) sits on a branch connecting one of its leaf-descendants to the root, and is at the origin of a new branch emanating from it. Enumerating the vertices in the order in which they are visited by a depth-first search, along with the lengths of the associated emanating branches, we obtain sequences $(v(1),\ldots,v(n))$, $(f(1),\ldots,f(n))$ of labels and branch lengths, with the branch lengths summing up to the total number of vertices. In turn, the branch lengths may be reinterpreted as step heights of lattice paths.  Alternatively, we may view the branch lengths $f(j)$ as multiplicities of the element $v(j)$ in some multiset. The precise constructions are more involved as $k$-ary trees may have more than one branch emanating from internal vertices and the natural structure for cycle-rooted trees is a cycle, rather than an ordered list, of the vertex labels. 

For every $n\in\N$ and $k\geq 2$, we will introduce a bijective map $\pi$ encoding $[n]$-labeled $k$-ornaments and a bijective map $\tau$ encoding cycle-rooted $k$-ary trees on $[n]$ using the same set of cyclically ordered multisets. Naturally, those maps $\tau$ and $\pi$ induce a bijection between the sets $\mathscr T^\circ_k(n)$ and $\mathscr P^\circ_k(n)$ for every $n\in\N$ and $k\geq 2$ which can be interpreted as a way to encode $k$-ary trees by monotone lattice paths and is similar the well-known encoding of binary trees by Dyck paths from~\cite{pitman2006combinatorial}. Moreover, the bijections $\pi$ and $\tau$ provide an alternative approach to finding the coefficients of $\log G_k$ --- by simply counting cyclically ordered multisets in the image of $\tau$ and $\pi$. Before we further discuss the encoding, we would like to introduce the set of cyclically ordered multisets rigorously:

\begin{definition}[Cyclically ordered multisets]\label{def:co_mult}
Let $k\geq 2$. A cyclically ordered $k$-multiset $(\sigma,f)$ on $[n]$ consists of a cycle (cyclic permutation) $\sigma$ on $[n]$ together with a map $f:[n]\to\N_0^{k-1}$ given by $$[n]\ni i\mapsto (f_1(i),\ldots,f_{k-1}(i))\in\N_0^{k-1}$$ such that $\sum_{i=1}^n\sum_{q=1}^{k-1} f_q(i)=n$. 
To the cycle $\sigma$, assign the cycle graph $C_\sigma=(V,E)$, given by $$V=[n]\times[k-1]$$ and $$E=\{\{(i,q),(j,p)\}\vert\ i=j\text{ and } \vert q-p \vert =1 \text{ or }i\text{ is
}\sigma\text{-predecessor of }j,\ q=k-1 \text{ and  }p=1\}.$$ Alternatively, one can view $f$ as a function on the nodes of $C_\sigma$, i.e., $f: [n]\times[k-1]\to \mathbb N_0$, $(i,q)\mapsto f_q(i)$. We denote the set of cyclically ordered $k$-multisets on $[n]$ by $\mathscr M^\circ_k(n)$.
\end{definition}
%
%

Let $n\in \N$. In the binary case $k=2$, one needs the whole set $\mathscr M^\circ_2(n)$ to encode the corresponding 2-ornaments or binary trees. For $k\geq 3$, however, the set $\mathscr M^\circ_k(n)$ is too big. We introduce a subset of $\mathscr M^\circ_k(n)$ which is naturally suited to encode the structures from $\mathscr P^\circ_k(n)$ and $\mathscr T^\circ_k(n)$:  

\begin{definition}[Multisets with root vertices] Let $m=(\sigma,f)\in \mathscr M^\circ_k(n)$, let $i,j\in[n]$ and let $1\leq k_i,k_j\leq k-1$. We call a simple 
 path on the circle graph $C_\sigma$ starting in $(i,k_i)$ and ending in $(j,k_j)$ a \emph{segment of $C_\sigma$} if $i=j$ and $k_i\leq k_j$ or if it is consistent with the orientation of $\sigma$, i.e., if $i\neq j$ and for every pair of consecutive points $(\ell_1,k-1), (\ell_2,1)$ in $s$ we have that $\ell_2$ is the $\sigma$-sucessor of $\ell_1$. 
To any segment $s$ of $
C_\sigma$ we assign the scope of $s$ given by $$\lambda(s)=\vert\{i\in[n]\vert (i,q)\in s \text{ for some }1\leq q\leq k-1\}\vert$$ and the weight of $s$ in $m$ given by $$w^{(m)}(s)=\sum_{(i,q)\in s} f_{q}(i).$$
For $m\in \mathscr M^\circ_k(n)$, we define the set of root vertices $W(m)$ by 
$$
W(m):=\{i\in[n]\vert \text { every segment } s\text{ of }C_\sigma\text{ starting in }(i,1) \text{ satisfies } w^{(m)}(s)\geq \lambda(s)\}.
$$
We denote the set of those multisets in $\mathscr M^\circ_k(n)$ that possess root vertices by $M(k,n)$, i.e.,
$$
M(k,n):=\{m\in\mathscr M^\circ_k(n)\vert W(m)\neq \emptyset\}.
$$ 
\end{definition}

\begin{figure}[ht]
\centering
\includegraphics[scale=0.3]{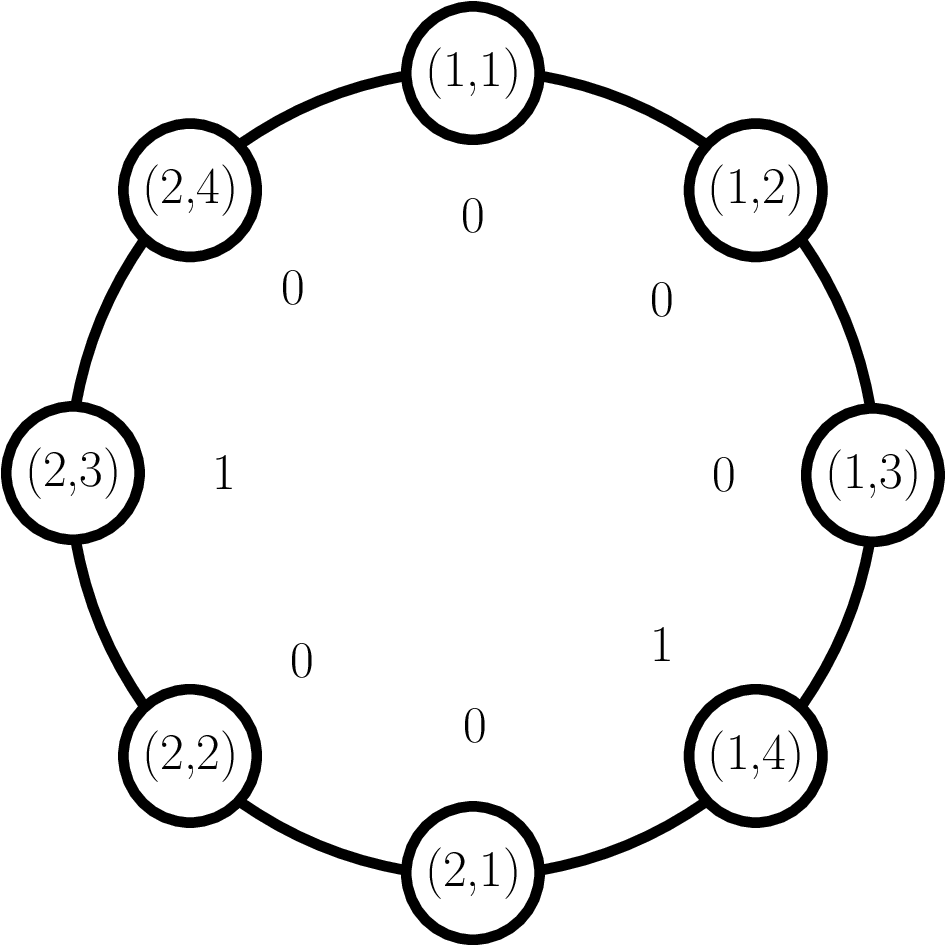}\qquad\qquad
\includegraphics[scale=0.3]{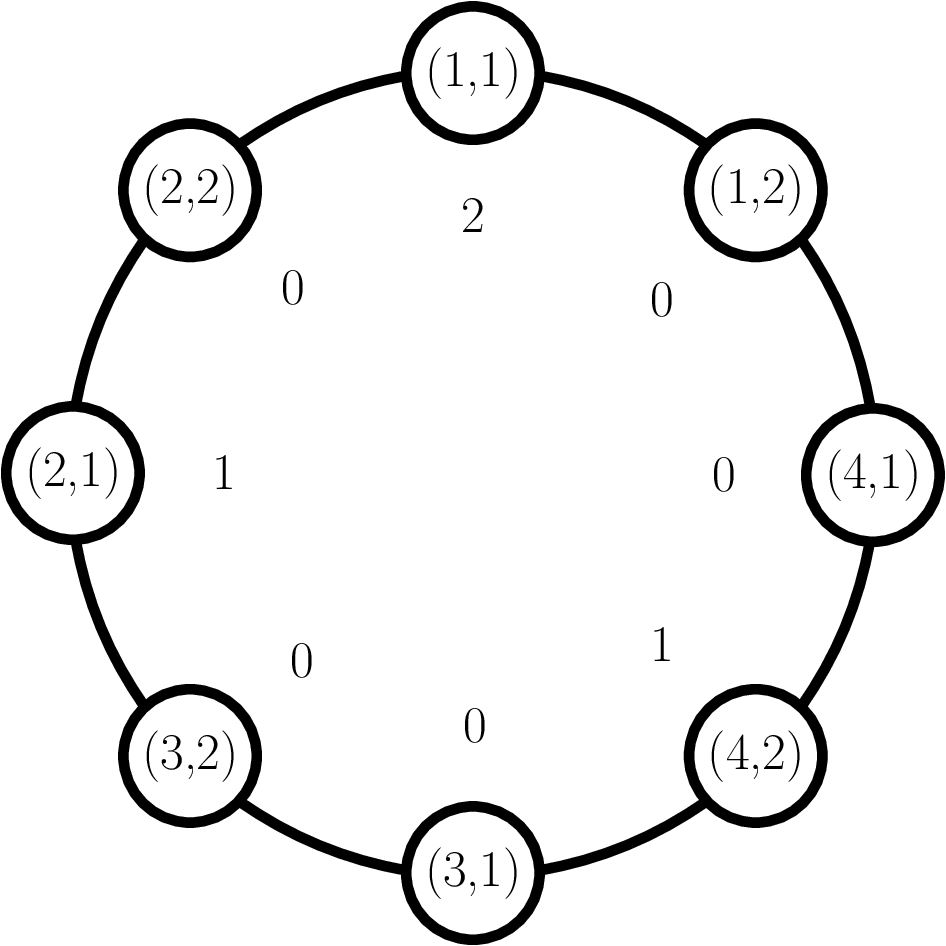}
\caption{On the left side $m\in\mathscr M^\circ_5(2)$ is depicted, on the right side $m'\in \mathscr M^\circ_3(4)$. The numbers inside the circle graph depict the multiplicities of the vertices of the circle graph $C_\sigma$ closest to them. Notice that $m\notin M(5,2)$, but $m'\in M(3,4)$, since $1,2\in W(m')$.}\label{fig:ex:multisets}
\end{figure}
Now we can introduce a map encoding lattice ornaments by cyclically ordered multisets:
\begin{definition}[Map $\pi$ encoding lattice ornaments by multisets]\label{def:pi}
Let $k\geq 2$ and $n\in \N$. We define the embedding $\pi:\mathscr P^\circ_k(n)\to \mathscr M^\circ_k(n)$ as follows: For a $k$-ornament $O\in\mathscr P_k^\circ(n)$, we set $\pi(O)=(\sigma, f)$, where $\sigma$ is simply given by the labeling of $O$. To obtain the map $f$, take any representative of $O$ and set $f_q(i)$, $q\in[k-1]$, $i\in[n]$, to be the number of steps to the right at the height $y=y_i+q-1$, where $y_i$ is the height labeled by $i$ in $O$. 
\end{definition}

\begin{rem}\label{rem:vir_1}
Naturally, the map $\pi$ is indeed injective. The property of the path $O$ to not rise above the diagonal $y=(k-1)x$ corresponds to the property $W(\pi(O))\neq \varnothing$ on the level of multisets. Moreover, the set of labels marking the heights  at which $O$ intersects the diagonal becomes the set $W(\pi(O))$. Thus the range $\pi(\mathscr P^\circ_k(n))$ of $\pi$ is given by $M(k,n):=\{m\in \mathscr M^\circ_k(n)\vert \ W(m)\neq \varnothing\}$ so that $\vert \mathscr P^\circ_k(n) \vert=\vert M(k,n)\vert$. For $k=2$, we have $M(k,n)=\mathscr M^\circ_k(n)$ and $\pi$ is a bijection.
\end{rem}
\begin{figure}[ht]
\centering
\includegraphics[scale=0.5]{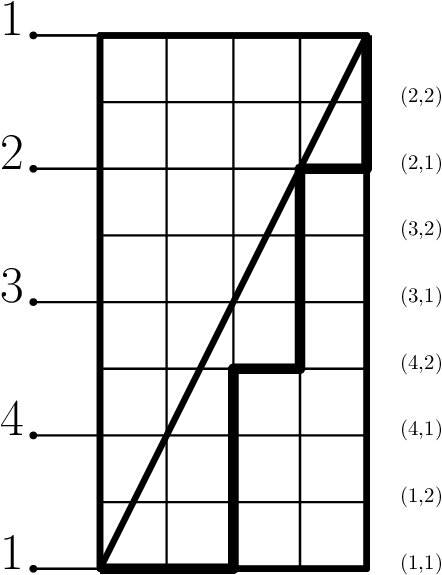}\qquad\qquad
\includegraphics[scale=0.3]{111222.eps}
\caption{The $[4]$-labeled 3-ornament corresponding to the 3-good path depicted on the left side is mapped by $\pi$ to the multiset in $\mathscr M^\circ_3(4)$ depicted on the right side.} \label{fig:ex2:multi}
\end{figure}
Now we investigate how cycle-rooted trees can be encoded by cyclically ordered multiset. To this end, we introduce the following map:

\begin{definition}[Map $\tau$ encoding cycle-rooted trees by multisets]\label{def:tau_step1+2}Let $k\geq2$ and $n\in\N$. We introduce an embedding $\tau:\mathscr T^\circ_k(n)\to \mathscr M^\circ_k(n)$. Given a cycle-rooted tree $t\in\mathscr T^\circ_k(n)$, we construct the cyclically ordered multiset $\tau(t)=(\sigma,f)\in \mathscr M^\circ_k(n)$ by the following two-step procedure:
\begin{itemize}
\item Step 1 (Constructing the cycle $\sigma$ by exploration of vertices in $t$): Starting at any root of $t\in \mathscr T^\circ_k(n)$, the cycle $\sigma$ is obtained by the following exploration procedure: In every step of the exploration, we uncover a single vertex of $t$. In the first step, we uncover an arbitrary root $r$ of $t$. In every further step, as long as there are unexplored vertices in the maximal $k$-ary subtree of $t$ rooted in $r$, we go to the last explored vertex that has an unexplored child and uncover its leftmost unexplored child. When the maximal $k$-ary subtree of $t$ rooted in $r$ is explored, we move to the next root in $t$ according to the cyclic order induced by the oriented cycle of roots $t$ and repeat the procedure. We stop when all vertices of $t$ are explored and define $\sigma$ as the cycle induced directly by the linear order in which the vertices of $t$ were uncovered. 
\item Step 2 (Define the function $f$ by re-distributing multiplicities of vertices in $t$):  Initially every vertex of $t$ is assigned a single multiplicity. Then the multiplicities are re-distributed between the vertices of $t$ by ``rolling-down" (viewed drawing the trees growing upwards with equiangular branches, see Figure~\ref{fig:pi:step1}): Let $i\in[n]$ be an arbitrary vertex of $t$. For $q\in[k]$, consider the path $\Theta_q(i)$ given by the unique simple path starting in $i$ and ending in its leaf-descendant such that every vertex $j\neq i$ on the path occupies slot $q$ of its parent. Denote by $\vert\Theta_q(i)\vert$ the the length of the the path $\Theta_q(i)$, i.e., the number of vertices on $\Theta_q(i)$ excluding $i$.

If $i$ is a root, i.e., $i\in R$, set $$f_q(i):=\vert\Theta_q(i)\vert+\delta_{q,1}$$ for $1\leq q\leq k-1$.

If $i$ is not a root, then $i$ is the child of a vertex, say $i$ occupies slot $p$ of its parent. For $1\leq q\leq k-1$, let $q'$ denote the $q^\text{th}$ smallest element of $[k]\backslash\{p\}$ and set   
$$f_q(i):=\vert\Theta_{q'}(i)\vert.$$

Notice that $\sum_{j=1}^n\sum_{q=1}^{k-1} f_q(j)=n$ indeed holds for the function $f$ defined above.

\begin{figure}[ht]
\centering
\includegraphics[scale=0.3]{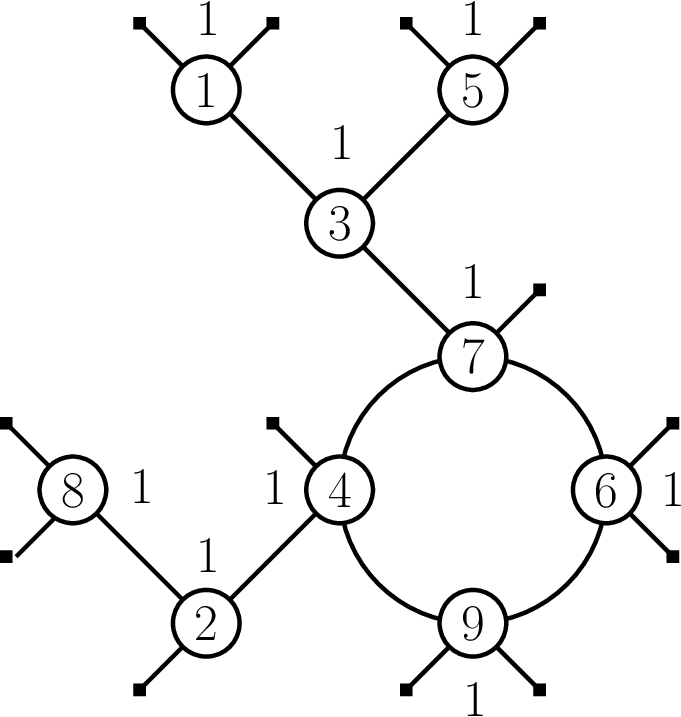}\quad \includegraphics[scale=0.3]{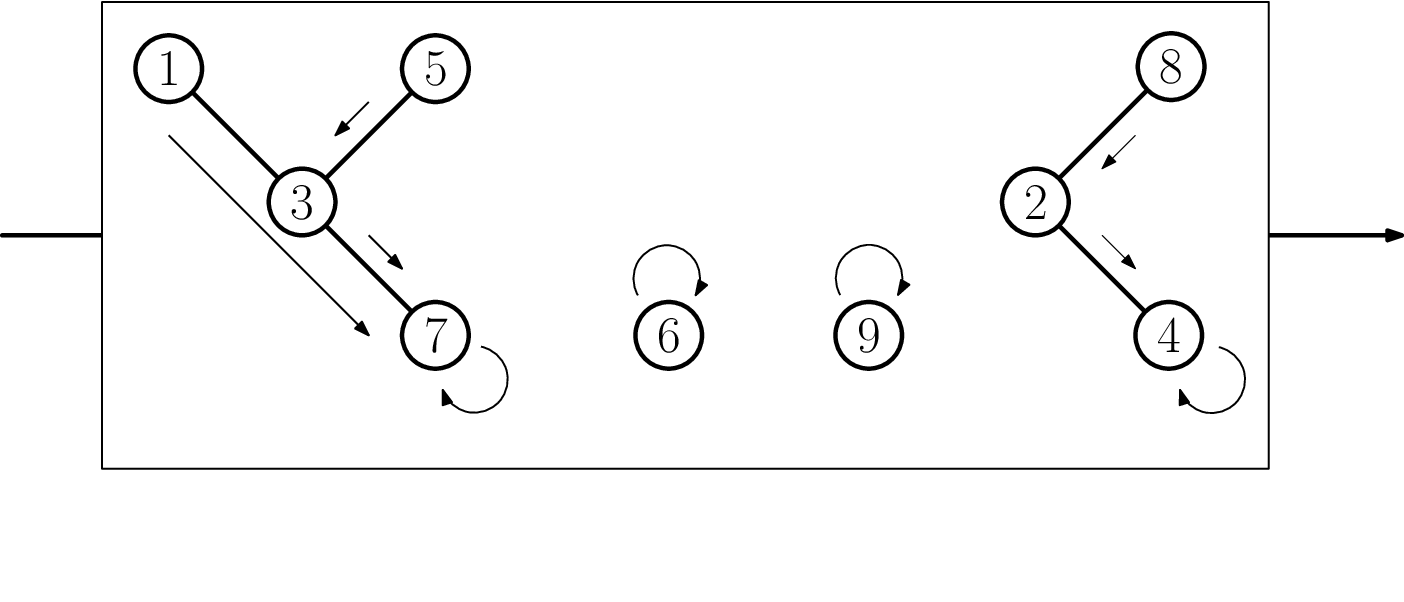}\quad 
\includegraphics[scale=0.3]{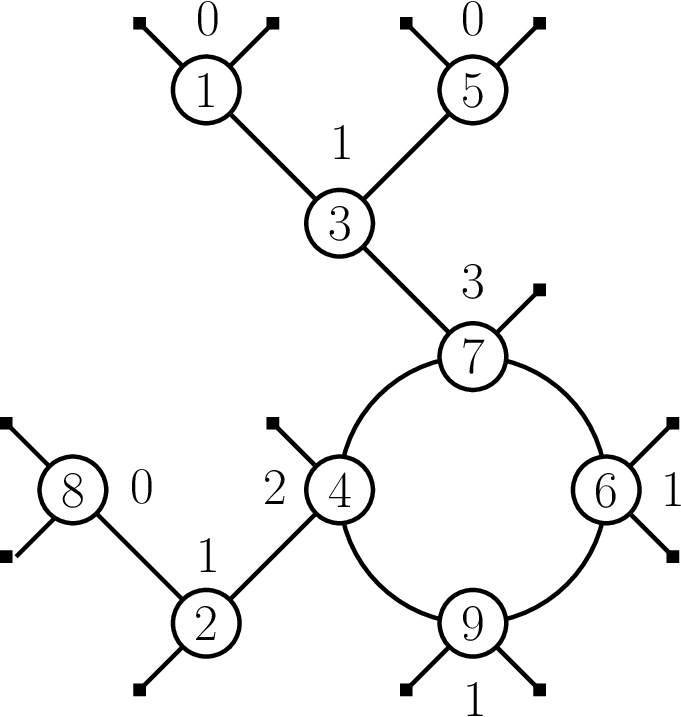}
\caption{Redistribution of multiplicities from Step 2 of Definition~\ref{def:tau_step1+2} in the binary case: The multiplicities of non-root vertices ``roll down" and the multiplicities of roots do not move.}\label{multiset_bin
(n9)}
\label{fig:pi:step1}
\end{figure}

\end{itemize}
\end{definition}
\begin{rem}\label{rem:vir_2}
The map $\tau$ is indeed injective. The set $R$ of roots of $t$ is mapped under $\tau$ precisely onto the set $W(\tau(t))$ on the level of multisets. Again, the range $\tau(\mathscr T^\circ_k(n))$ of $\tau$ is given by $M(k,n)$ so that $\vert\mathscr T^\circ_k(n) \vert =\vert M(k,n)\vert$. In the binary case $k=2$, we have $M(k,n)=\mathscr M^\circ_k(n)$ and $\tau$ is a bijection.
\end{rem}

\begin{figure}[ht]
\centering
\includegraphics[scale=0.4]{cycle_rooted_tree_n9__equi.eps}\qquad
\includegraphics[scale=0.3]{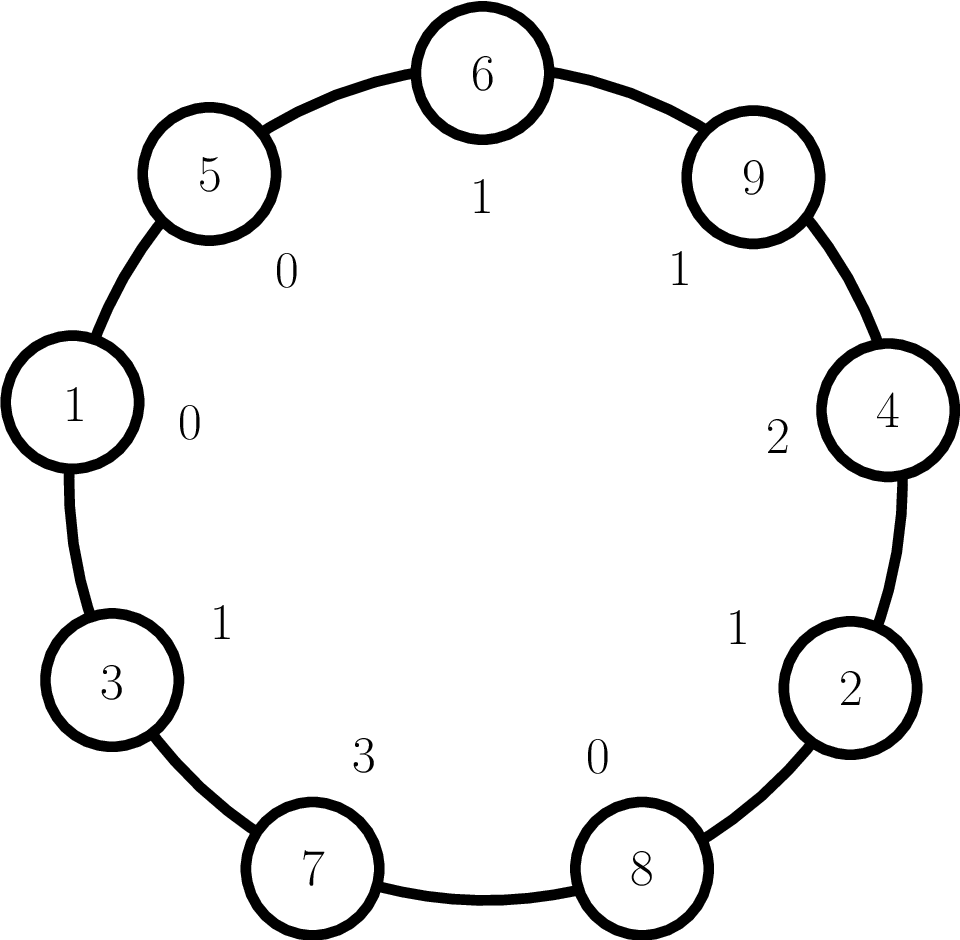}
\caption{Final result: The cycle-rooted tree from Figure~\ref{fig:cycle_rooted_tree(n9)} (depicted on the left side) is mapped by $\tau$ to the multiset from $\mathscr M^\circ_{2}(9)$ (depicted on the right side).}
\label{fig:pi}
\end{figure}

Let $k\geq 2$ and $n\in\N$. By Remark~\ref{rem:vir_1} and Remark~\ref{rem:vir_2}, a bijection between the sets $\mathscr T^\circ_k(n)$ and $\mathscr P^\circ_k(n)$ is given by the composition $\hat\pi^{-1}\circ\tau$, where $\hat\pi: \mathscr P_k^\circ(n)\to M(k,n)$ is given by $\hat\pi(O)=\pi(O)$ for $O\in\mathscr P_k^\circ(n)$. Moreover, let $t\in\mathscr T^\circ_k(n)$ and $O_t:=\hat\pi^{-1}(\tau (t))$, then there is a one-to-one correspondence between the roots of $t$ (vertices $R$ of the cycle subgraph of $t$) and the labels at which $O_t$ intersects the diagonal $y=(k-1)x$. The bijection can be viewed as an alternative to the well-known encoding of binary trees by Dyck paths presented in~\cite[Chapter 6.3]{pitman2006combinatorial} which also involves a depth-first exploration of the tree (as described in Step 2 of Definition~\ref{def:tau_step1+2}).

Finally, notice the following: It can be shown that the set $M(k,n)$ contains exactly the fraction $\frac{1}{k-1}$ of all elements in $\mathscr M_k^\circ(n)$. Since by definition $\vert\mathscr M^\circ_k(n) \vert=(n-1)!\mch{(k-1)n}{n}$ holds, where $\mch{i}{j}$ denotes the multiset coefficient and can be written as $\mch{i}{j}=\binom{i+j-1}{j}$ for $i,j\in\N$, we have 
$$
\vert M(k,n) \vert=\frac{\vert\mathscr M_k^\circ(n)\vert}{k-1}=\frac{(n-1)!}{k-1}\binom{kn-1}{n}=\frac{(kn-1)!}{(kn-n)!}.
$$
 This outlines an alternative proof for Theorem~\ref{thm:orn_count}, since we have $\vert M(k,n) \vert=\vert \mathscr P^\circ_k(n)\vert=\vert\mathscr T^\circ_k(n)\vert$ and thus $\log G_k$ is the exponential generating function for $(M(k,n))_{n\in\N}$, i.e.,
$$
\log G_k(x)=\sum_{n\geq 1}\frac{x^n}{n!}\vert M(k,n) \vert=\sum_{n\geq 1}\frac{x^n}{n!}\frac{(kn-1)!}{(kn-n)!}.
$$ 

\subsection*{Acknowledgements}
S.J. thanks Peter Winkler for discussions during a stay at the
Institute for Computational and Experimental Research in Mathematics
(ICERM) in Providence, RI, during the semester program \emph{Phase
transitions and emerging properties} in spring 2015. L.K. thanks Jan Philipp Neumann for helpful comments on an earlier version of the paper.

\end{document}